\documentclass[reqno,a4paper,10pt]{amsart}
\NeedsTeXFormat{LaTeX2e}

\DeclareMathAlphabet{\mathcal}{OMS}{cmsy}{m}{n}
\usepackage[latin1]{inputenc}
\usepackage[T1]{fontenc}
\usepackage[english]{babel}
\usepackage[backref,allcolors=blue,colorlinks=true, linkcolor=blue, bookmarks=true,plainpages]{hyperref}
\usepackage{caption}
\usepackage{subcaption}

\usepackage{indentfirst}
\usepackage{tablefootnote}
\usepackage[lmargin=3.8cm,tmargin=2.5cm,bmargin=2.5cm,rmargin=3.8cm]{geometry}

\usepackage{lipsum}

\usepackage[normalem]{ulem}

\usepackage[scaled=1.0076331,helvratio=0.9160502]{newtxtext}
\usepackage{times}
\usepackage{amsmath,amssymb,amsthm,amsfonts,mathtools}
\usepackage{mathptmx} 
\usepackage{dsfont} 
\usepackage{bbm}
\usepackage{pifont}

\usepackage{enumitem}

\usepackage{filecontents}




\newtheoremstyle{Def}
{5pt +1\p@ -2.0\p@}
{5pt +1\p@ -2.0\p@}
{\normalfont}			      
{}				  
{\bfseries}   
{.}               
{.4em}       
{}               
\theoremstyle{Def}
\newtheorem{definition}{Definition}[section]

\newtheoremstyle{theorem}
{5pt +1\p@ -2.0\p@}
{5pt +1\p@ -2.0\p@}
{\it}			      
{}				  
{\bfseries}   
{.}               
{.4em}       
{}  
\theoremstyle{theorem}
\newtheorem{theorem}{Theorem}[section]
\newtheorem{proposition}[theorem]{Proposition}

\newtheorem{lemma}[theorem]{Lemma}

\newtheorem{remark}[theorem]{Remark}

\numberwithin{equation}{section}

\newcommand{\R}{\mathbb R}

\newcommand{\N}{\mathbb N}
\newcommand{\Z}{\mathbb Z}

\makeatletter
\setcounter{tocdepth}{3}

\renewcommand{\tocsection}[3]{%
  \indentlabel{\@ifnotempty{#2}{\bfseries\ignorespaces#1 #2\quad}}\bfseries#3}
\renewcommand{\tocsubsection}[3]{%
  \indentlabel{\@ifnotempty{#2}{\ignorespaces#1 #2\quad}}#3}

\newcommand\@dotsep{4.5}
\def\@tocline#1#2#3#4#5#6#7{\relax
  \ifnum #1>\c@tocdepth 
  \else
    \par \addpenalty\@secpenalty\addvspace{#2}%
    \begingroup \hyphenpenalty\@M
    \@ifempty{#4}{%
      \@tempdima\csname r@tocindent\number#1\endcsname\relax
    }{%
      \@tempdima#4\relax
    }%
    \parindent\z@ \leftskip#3\relax \advance\leftskip\@tempdima\relax
    \rightskip\@pnumwidth plus1em \parfillskip-\@pnumwidth
    #5\leavevmode\hskip-\@tempdima{#6}\nobreak
    \leaders\hbox{$\m@th\mkern \@dotsep mu\hbox{.}\mkern \@dotsep mu$}\hfill
    \nobreak
    \hbox to\@pnumwidth{\@tocpagenum{\ifnum#1=1\bfseries\fi#7}}\par
    \nobreak
    \endgroup
  \fi}
\AtBeginDocument{%
\expandafter\renewcommand\csname r@tocindent0\endcsname{0pt}
}
\def\l@subsection{\@tocline{2}{0pt}{2.5pc}{5pc}{}}
\makeatother

\begin{document}

\title[Fourier transform decay on Hardy-Morrey spaces]{\large{Fourier transform decay of distributions in Hardy-Morrey spaces}}

\author {Marcelo F. de Almeida}
\address{Departamento de Matem\'atica, Universidade Federal de Sergipe, S\~ao Crist\'ov\~ao, SE,
49000-000, Brasil}
\email{marcelo@mat.ufs.br}

\author {Tiago Picon}
\address{Departamento de Computa\c{c}\~ao e Matem\'atica, Universidade S\~ao Paulo, Ribeir\~ao Preto, SP, 14040-901, Brasil}
\email{picon@ffclrp.usp.br}

\thanks{Work supported in part by CNPq (409306/2016-9) and FAPESP (2013/17636-5 and 2018/15484-7).}
\subjclass[2000]{42B30, 42B99, 42B15, 35S05}

\keywords{Fourier transform decay;  Hardy-Morrey spaces, cancelation conditions, pseudodifferential operators.}

\begin{abstract}
In this {paper} we {establish} decay estimates for Fourier transform on Hardy-Morrey spaces and its localizable version. Our work include some aspects to these spaces linked up with {pointwise Fourier estimates}, in particular  {a natural approach} on cancellation moment conditions.  As application, {we discuss the optimality}
for continuity of Fourier multipliers and pseudodifferential operators in Hardy-Morrey spaces.

\end{abstract}

\maketitle

\tableofcontents

\section {Introduction}
The theory of  Morrey spaces $\mathcal{M}^{\lambda}_{q}(\R^{n})$  have been extensively developed during the last decades in the several settings 
(see e.g. \cite{Adams,Adams0,Giga,Ta}). 
These spaces describe local regularity of functions in $L^q_{loc}(\R^n)$ by special averages on balls as a refinement of Lebesgue spaces. Among {many} applications of Morrey spaces  we highlight the classical gain of Sobolev embedding into H\"older function space $C^{0,\delta}(\R^{n})$ archived by Morrey (see \cite{Adams0} for details). In the other hand, it is well known that Hardy space $H^{q}(\R^n)$ (\cite{FS}) are nice substitutes of  Lesbegue spaces and that a rich functional distributional space shines for $0<q\leq 1$ with {several} applications in Harmonic Analysis, Functional Analysis and PDEs (see e.g. \cite{Garcia,S}).

In \cite{Tanaka, HH} {the authors} considered an hybrid space  $\mathcal{HM}^{\lambda}_{q}(\R^{n})$ for $0< q\leq \lambda <\infty$, called the Hardy-Morrey space, presenting a natural characterization via smooth maximal function and atomic decomposition theorem (see \cite[p.100]{HH} and \cite[Theorems 1.3 and 1.4]{Tanaka}). As expected, these spaces recover the Morrey spaces for  $1<q<\lambda<\infty$ and Hardy spaces for $0<q=\lambda<1$ {(see \cite{FS})}. Variations of Hardy-Morrey spaces have been studied in \cite{akb, K-p-Ho, K-p-Ho2} and explored in several {contexts} as Calder\'on-Zygmund theory, singular integral operators \cite{WJ} and characterizations of trace-law to Riesz potentials  \cite{Xiao}. These studies are motivated by applications on regularity theory of PDEs, geometric harmonic/potential analysis  and fluid dynamics  as well. {We point out that comparable to Morrey space $\mathcal{M}^{\lambda}_{q}(\R^{n})$ some properties for $\mathcal{HM}^{\lambda}_{q}(\R^{n})$} breaks down when {$0<q<\lambda<1$}. For instance, the Hardy-Morrey spaces are not closed by multiplication on test functions and for this reason some linear operators (as pseudodifferential operators) in general are not bounded on $\mathcal{HM}^{\lambda}_{q}(\R^{n})$.  It follows by intrinsic relation with Hardy spaces empowered with suitable nature of  Morrey norm.

In this paper we carry further to study of decay estimates for Fourier transform of distributions in Hardy-Morrey spaces (see Definition \ref{defHM}) and its localizable version $h\mathcal{M}_q^{\lambda}(\mathbb{R}^n)$ (see Definition \ref{defhM}). 
Our main result is the following:
\begin{theorem}
Let {$0<q \leq \lambda \leq 1$}.
If $f\in\mathcal{HM}_{q}^{\lambda}(\R^n)$  then there exists $C=C(n,q,\lambda)>0$ such that $\widehat{f}(\xi)$ is a continuous function and {satisfies} the {pointwise Fourier transform decay} 
\begin{equation}\label{decay-Fourier2a}
|\widehat{f}(\xi) | \leq C |\xi|^{n\left(\frac{1}{\lambda}-1 \right)} \|f \|_{\mathcal{HM}_{q}^{\lambda}}, {\quad  \forall  \,\,\xi \in \R^n}.
\end{equation}
If  $f\in h\mathcal{M}_{q}^{\lambda}(\R^n)$ then its  Fourier transform satisfies 
\begin{equation}
|\widehat{f}(\xi) | \lesssim  \langle\xi\rangle^{n\left(\frac{1}{\lambda}-1 \right)} \|f \|_{h\mathcal{M}_{q}^{\lambda}}, 
\end{equation}
where $\langle\xi\rangle:=(1+\vert\xi\vert^2)^{1/2}$.
\end{theorem}
The proof of the previous theorem is presented in Theorem  \ref{cor-decay} for distributions in Hardy-Morrey spaces and Theorem  \ref{cor-decay2}  for localizable version. As we can see some aspects of these spaces linked up to a suitable analysis on the necessity of moment conditions inspired by works due to Goldberg \cite{G} and Hounie \& Kapp \cite{HK}. {Also, as we can see by Remark \ref{rem1},  the decay estimates  for Fourier transform are not expected in Morrey spaces as $1\leq q<\lambda<\infty$}. Our approach follows by using the classical maximal function definition of the Hardy-Morrey spaces contributing to simplify several proofs found in the literature. In addition, {closely related to  Bownik \& Wang \cite{Bownik}} we present an analysis on moment conditions on Hardy-Morrey spaces  and cancellation properties on its localizable version. 

The organization of the paper is as follows. In the Section \ref{se2a}, we present general aspects of Hardy-Morrey spaces, in special we prove the necessity of moment conditions for functions in $\mathcal{HM}^{\lambda}_{q}(\R^{n})$ linked up with the Fourier transform {decay} for integrable fucntions given by Proposition \ref{prop2.5}.  {In Section \ref{Point-parte1} is devoted to the extension of Fourier transform decay (see Theorem \ref{cor-decay}) 
\begin{equation}\label{key-dec-int}
|\widehat{f}(\xi) | \leq C  |\xi|^{n\left(\frac{1}{\lambda}-1 \right)} \|f \|_{\mathcal{HM}_{q}^{\lambda}}, {\quad  \forall  \,\,\xi \in \R^n}
\end{equation}
for distributions $f$ in $\mathcal{HM}^{\lambda}_{q}(\R^{n})$ as $0<q \leq \lambda\leq 1$.  The proof is a careful analysis derived from Propositions \ref{prop2.5} and \ref{prop3.4} combined with 
atomic decomposition theorem \cite{HH}} (see Theorem \ref{atomich}). {Moreover, in Remark \ref{rem1} and Proposition \ref{exam.53} we show that the constraint  $0<q \leq \lambda<1$ 
is necessary in Theorem \ref{cor-decay}. 
As an application of the estimate \eqref{key-dec-int}, we discuss  an optimality of the parameters in the boundedness of certain Fourier multipliers on Hardy-Morrey spaces at Proposition \ref{optimal_Sigma2}.
Section \ref{sec4} is concerned to the localizable Hardy-Morrey spaces with emphasis to a natural extension of Fourier transform decay \eqref{key-dec-int} for distributions in $h\mathcal{M}_{q}^{\lambda}(\R^n)$, namely, 
\begin{equation} \label{key-dec-int2}
|\widehat{f}(\xi) | \leq C  \langle \xi\rangle^{n\left(\frac{1}{\lambda}-1 \right)} \|f \|_{h\mathcal{M}_{q}^{\lambda}}, {\quad  \forall  \,\,\xi \in \R^n}
\end{equation}
as $0<q\leq\lambda\leq 1$ (see Theorem \ref{cor-decay2}). This result strengthen the standard Fourier transform decay \cite[Proposition 5.1]{HK} on localizable Hardy space $h^{p}(\mathbb{R}^n)$ for $0<p\leq 1$. We also present some considerations on the optimality for continuity of standard pseudodifferential operators.

\section{Hardy-Morrey space \texorpdfstring{$\mathcal{HM}_q^\lambda$}{lh} }\label{se2a}
In this section we recall some properties and aspects of  Hardy-Morrey spaces and its nonhomogeneous version linked up to a suitable analysis on the necessity of moment conditions that can not be found in the literature.   A measurable function $f\in L^1_{loc}(\mathbb{R}^n)$ belongs to Morrey space $\mathcal{M}^\lambda_q(\mathbb{R}^{n})$ if it satisfies
\begin{equation}
\Vert f \Vert_{\mathcal{M}^\lambda_q}:=\sup_{Q}\vert Q\vert^{{1}/{\lambda}-{1}/{q}}\left( \int_Q\vert f(y)\vert^qdy \right)^{1/q}<\infty,  \nonumber
\end{equation} 
for $0<q\leq \lambda<\infty$, where the supremum is taken over all cubes $Q$ with sidelength $\ell_Q>0$ and Lebesgue measure $|Q|$.  The Morrey spaces endowed by functional $\Vert \cdot\Vert_{\mathcal{M}^{\lambda}_{q}}$ are complete metric spaces and include strictly  (see \cite{CTZ}) the Lebesgue spaces $L^q(\R^n)$. 
However these spaces differ in several points, for instance, 
the functions in $\mathcal{M}^\lambda_q(\mathbb{R}^{n})$ can not be approximated by smooth functions as long as $1\leq q<\lambda<\infty$. 
We refer \cite{Adams, Adams0,Giga,Ta} for an introduction on  these spaces for $1 \leq  q<\lambda<\infty$ and \cite[Chapter 3]{Triebel} for $0<q<\lambda<\infty$.

Jia and Wang had introduced the following class of tempered distributions.
\begin{definition}\label{defHM}Let $0<q\leq\lambda<\infty$. We say that a tempered distribution $f \in \mathcal{S}'(\mathbb{R}^{n})$ belongs to $\mathcal{HM}_q^{\lambda}(\mathbb{R}^{n})$, if there exists $\varphi \in \mathcal{S}(\mathbb{R}^{n})$ with  $\displaystyle{\int_{\R^{n}} \varphi(x)dx \neq 0}$ such that  the smooth maximal function
	$M_{\varphi}f\in \mathcal{M}_q^{\lambda}(\mathbb{R}^{n})$, where 
	\begin{equation}
	M_{\varphi}f(x)=\sup_{0<t<\infty} \left\vert \varphi_{t}\ast f(x)\right\vert, \quad \quad \varphi_t(x)=t^{-n}\varphi(x/t).
	\end{equation}
\end{definition}
The functional $\|f\|_{\mathcal{HM}_q^{\lambda}}:=\|M_{\varphi}f\|_{\mathcal{M}_{q}^{\lambda}}$ defines a quasi-norm as $0<q<1$ and is a norm for $q\geq 1$ (we always refer as a ``norm'' for simplicity).  It follows that $\mathcal{HM}_q^{\lambda}(\R^n)$ endowed by distance 
$d(f, g):=\|f-g\|_{\mathcal{HM}_q^{\lambda}}$ is a complete metric space \cite[Lemma 2.5]{HH}. We point out that $ \mathcal{HM}_{q}^{\lambda}(\mathbb{R}^n)$ coincides with $\mathcal{M}_{q}^{\lambda}(\mathbb{R}^n)$ as $1<q\leq \lambda<\infty$, i.e. there exist  constants $c_{1}, c_{2}>0$ depending only the parameters $n,q, \lambda$  such that   
\begin{equation}\label{hm}
c_{1}\|f\|_{\mathcal{HM}_q^{\lambda}}\leq \|f\|_{\mathcal{M}_q^{\lambda}} \leq c_{2} \|f\|_{\mathcal{HM}_q^{\lambda}}. 
\end{equation}
Moreover, the  continuous inclusion $\mathcal{HM}_{1}^\lambda(\R^n)\hookrightarrow \mathcal{M}_{1}^{\lambda}(\R^n)$ holds for $1<\lambda<\infty$ 
(see \cite{Tanaka,Xiao}). Obviously some properties are inherited from Morrey spaces, for instance the convexity in Morrey spaces are naturally extended by Hardy-Morrey spaces i.e. 
\begin{equation}
\Vert f\Vert_{\mathcal{HM}^{\lambda_2}_{q_2} }\leq \Vert f\Vert_{\mathcal{HM}^{\lambda_1}_{q_1}}^{\theta} \Vert f\Vert_{\mathcal{HM}^{\lambda_3}_{q_3}}^{1-\theta},\nonumber
\end{equation}
where $0<q_1<q_2<q_3 \leq \infty$ and  $0<\lambda_1<\lambda_2<\lambda_3 \leq  \infty$  satisfy 
$\frac{1}{q_2}=\frac{\theta}{q_1}+\frac{1-\theta}{q_3}$ and  ${\frac{1}{\lambda_{2}}=\frac{\theta}{\lambda_{1}}}+\frac{1-\theta}{\lambda_{3}}$ for $0<\theta<1$. {Now, consider $\delta_{R}f(x):= f(Rx)\,$ for $R>0$. Since $\varphi_{t} \ast \delta_{R}f={\delta_R (\varphi_{tR} \ast f)}$, it follows from scaling property of Morrey space $\mathcal{M}_q^\lambda(\R^n)$ that
	\begin{equation}\label{scaling}
		\|\delta_{R}f\|_{\mathcal{HM}_q^\lambda}=\|   M_{\varphi}(\delta_{R}f)\|_{\mathcal{M}_q^\lambda}=\| \delta_{R}(M_{\varphi}f)\|_{\mathcal{M}_q^\lambda}=R^{-n/\lambda}\|f\|_{\mathcal{HM}_q^\lambda}, 
	\end{equation}
	which is the expected scaling for  {distributions in}  Hardy-Morrey space $\mathcal{HM}_q^\lambda(\R^n)$.}

Analogous to Hardy spaces another maximal characterizations are obtained for Hardy-Morrey spaces, that will be describe in sequel. For each  $N \in \mathbb{N}_0:=\N \cup \left\{0\right\}$ let  $\mathcal{F}:=\{ \|\cdot\|_{\alpha,\beta}\,:\, \alpha,\beta\in \mathbb{N}_0^n\, \text{ such that }\,\vert \alpha\vert\leq N,\, \vert\beta\vert\leq N\, \}$ be a finite collection of semi-norms defined {on} $\mathcal{S}(\mathbb{R}^n)$ and consider
$\mathcal{S}_{\mathcal{F}}:=\left\{ \varphi \in \mathcal{S}(\mathbb{R}^n) \, :\, \|\varphi\|_{\alpha,\beta} =\sup_{x\in\mathbb{R}^n}\left\vert x^{\alpha}\partial_x^{\beta} \varphi(x)\right\vert \leq 1, \, \text{ for all }\, \|\cdot\|_{\alpha,\beta} \in \mathcal{F}   \right\}. $
We define the \textit{grand maximal function} of $f$ by 
$\displaystyle{M_{\mathcal{F}}f(x):=\sup_{\varphi \in \mathcal{S_{F}}} M_{\varphi}f(x)}$
and the \textit{non-tangential} version of $M_{\varphi}$  is defined by  
\begin{equation}\label{nont_max}
M_{\varphi}^{\ast}f(x):=\sup_{\vert x-y\vert<t} \left\vert \varphi_t\ast f(y)\right\vert.
\end{equation}
Clearly $M_{\varphi}f(x)\leq M^{\ast}_{\varphi}f(x)$ and $M_{\varphi}f(x)\lesssim \footnote{The notation $f \lesssim g $ means that there exists a constant $C>0$ such that $f(x)\leq C g(x)$ for all $x \in \R^n$} M _{\mathcal{F}}f(x)$ for every $x \in \R^{n}$. Lastly, we say that $f \in \mathcal{S}'(\mathbb{R}^{n})$ is a bounded tempered distribution if $f \ast \varphi \in L^{\infty}$ for all $\varphi \in \mathcal{S}(\R^{n})$. The following characterization can be stated for Hardy-Morrey spaces.
\begin{theorem}[\cite{HH}]\label{thm-eqHardy} 
Let $0< q\leq \lambda < \infty$ and $f \in \mathcal{S}'(\mathbb{R}^{n})$. The following statements are {equivalents}:
\begin{itemize}
	\item[(i)] There is $\varphi \in \mathcal{S}(\mathbb{R}^n)$ with $\int_{\R^n} \varphi(x) dx=1$ such that  $ M_{\varphi}f \in \mathcal{M}_{q}^{\lambda}(\mathbb{R}^n)$.
        \item[(ii)] There exist a collection $\mathcal{F}$ so that $M_{\mathcal{F}}f\in\mathcal{M}_{q}^{\lambda}(\mathbb{R}^n)$.
   \item[(iii)]  f is a bounded tempered distribution and $M^{\ast}_{\varphi}f \in\mathcal{M}_{q}^{\lambda}(\mathbb{R}^n)$.
	\end{itemize}
Moreover,
\begin{equation}\label{maximalop1}
\Vert M_{\mathcal{F}}f\Vert_{\mathcal{M}^{\lambda}_{q} } \lesssim  \Vert M^{\ast}_\varphi f\Vert_{\mathcal{M}^{\lambda}_{q}} \lesssim \Vert M_\varphi f\Vert_{\mathcal{M}^{\lambda}_{q}} \lesssim \Vert M_{\mathcal{F}}f\Vert_{\mathcal{M}^{\lambda}_{q}}.
\end{equation}	   
\end{theorem}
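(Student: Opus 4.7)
The plan is to mirror the classical Fefferman--Stein characterization of $H^{p}(\R^{n})$ via equivalence of radial, non-tangential, and grand maximal functions, transferring the argument to the Morrey scale. The only ``new'' analytic input required is the Chiarenza--Frasca boundedness of the Hardy--Littlewood maximal operator $M$ on $\mathcal{M}^{\lambda}_{q}(\R^{n})$ for $1 < q \leq \lambda < \infty$, which together with the scaling identity $\|g\|_{\mathcal{M}^{\lambda}_{q}}^{r} = \|\,|g|^{r}\|_{\mathcal{M}^{\lambda/r}_{q/r}}$ reduces any $0 < q \leq \lambda < \infty$ to the range $q/r > 1$ by choosing $r > 0$ sufficiently small. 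The three equivalences will then follow from the norm chain in \eqref{maximalop1}, which I would establish inequality by inequality.

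The rightmost bound $\|M_{\varphi} f\|_{\mathcal{M}^{\lambda}_{q}} \lesssim \|M_{\mathcal{F}} f\|_{\mathcal{M}^{\lambda}_{q}}$ is pointwise: rescaling $\varphi$ so that $\varphi/C \in \mathcal{S}_{\mathcal{F}}$ gives $M_{\varphi} f(x) \leq C\, M_{\mathcal{F}} f(x)$. For the middle bound $\|M_{\varphi}^{*} f\|_{\mathcal{M}^{\lambda}_{q}} \lesssim \|M_{\varphi} f\|_{\mathcal{M}^{\lambda}_{q}}$, the central tool is the Fefferman--Stein tangential maximal
\[
M_{\varphi}^{**} f(x) := \sup_{t>0,\, y \in \R^{n}} \frac{|(\varphi_{t} \ast f)(x-y)|}{(1+|y|/t)^{N}},
\]
for $N$ large. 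One has the obvious pointwise control $M_{\varphi}^{*} f(x) \leq 2^{N} M_{\varphi}^{**}f(x)$ together with the Fefferman--Stein inequality
\[
M_{\varphi}^{**} f(x) \lesssim \bigl(M(|M_{\varphi} f|^{r})(x)\bigr)^{1/r},
\]
valid for every $r > 0$ provided $N$ is large compared to $n/r$. Choosing $r \in (0,q)$ and applying Chiarenza--Frasca in $\mathcal{M}^{\lambda/r}_{q/r}$ then yields the claim via the scaling identity.

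The leftmost inequality $\|M_{\mathcal{F}} f\|_{\mathcal{M}^{\lambda}_{q}} \lesssim \|M_{\varphi}^{*} f\|_{\mathcal{M}^{\lambda}_{q}}$ follows by the same scheme with $M_{\varphi} f$ replaced by $M_{\varphi}^{*} f$. The extra work is the pointwise bound $M_{\mathcal{F}} f(x) \lesssim M_{\varphi}^{**} f(x)$, which I would obtain by expanding an arbitrary $\psi \in \mathcal{S}_{\mathcal{F}}$ through a Calder\'on-type reproducing formula built from a fixed $\varphi$ with $\int \varphi = 1$, reducing $\psi_{t} \ast f$ to an integral of $\varphi_{s} \ast f$ against a rapidly decaying kernel and then absorbing that kernel into the weight $(1+|y|/t)^{N}$ defining $M_{\varphi}^{**}$. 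The boundedness of $f$ as a tempered distribution required in (iii) is a by-product: the same reproducing formula writes $\psi \ast f$, for any $\psi \in \mathcal{S}$, as a convergent integral of convolutions $\varphi_{s} \ast f$, whose essential suprema are already controlled by $M_{\varphi} f$ together with the dilation structure.

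The main obstacle is the Fefferman--Stein pointwise inequalities of the form $M_{\varphi}^{**} f(x) \lesssim [M(|g|^{r})(x)]^{1/r}$, because once they are in hand the entire Morrey-space argument reduces to the scalar boundedness of $M$ after power peeling. The delicate point is the joint tuning of $N$ and $r$: $N$ must be taken large as $r \to 0^{+}$, but since $r$ is at our disposal we can always arrange $q/r > 1$ and the proof closes for the full range $0 < q \leq \lambda < \infty$.
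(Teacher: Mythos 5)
The paper does not actually prove Theorem \ref{thm-eqHardy}: it is quoted verbatim from Jia--Wang \cite{HH}, so there is no in-paper argument to compare against. Your outline is the standard Fefferman--Stein/Goldberg route and is essentially the argument the cited source runs: all the work is in pointwise maximal inequalities (rescaling $\varphi$ into $\mathcal{S}_{\mathcal{F}}$, the peak-point estimate $M_{\varphi}^{**}f \lesssim \bigl(M(|M_{\varphi}f|^{r})\bigr)^{1/r}$, and the Calder\'on reproducing formula for $M_{\mathcal{F}}f \lesssim M_{\varphi}^{**}f$), after which the only space-dependent input is boundedness of $M$ on $\mathcal{M}^{\lambda/r}_{q/r}$ with $q/r>1$ via the identity $\Vert\, |g|^{r}\Vert_{\mathcal{M}^{\lambda/r}_{q/r}}=\Vert g\Vert^{r}_{\mathcal{M}^{\lambda}_{q}}$; this transfer is sound. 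One point you should not gloss over in a full write-up: the peak-point argument presupposes $M_{\varphi}^{**}f(x)<\infty$ at the points in question, which in the classical proof is secured by first working with damped maximal functions (an extra factor such as $(t/(1+t))^{L}(1+|y|)^{-L}$), proving the estimate with constants independent of the damping, and then removing it by monotone convergence; the same device works verbatim in the Morrey scale, but it is a genuine step, not a formality. Your derivation of the boundedness of $f$ as a tempered distribution is also fine, since $\Vert \varphi_{t}\ast f\Vert_{L^{\infty}}\lesssim t^{-n/\lambda}\Vert M^{\ast}_{\varphi}f\Vert_{\mathcal{M}^{\lambda}_{q}}$ (the estimate \eqref{control2} used later in the paper) feeds into the reproducing formula exactly as in the $H^{p}$ case.
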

Hence, $f$ belongs to $\mathcal{HM}_{q}^{\lambda}(\mathbb{R}^n)$ if one (thus all) of the properties above are satisfied and  from Theorem \ref{thm-eqHardy}(ii)  we remark that  $\|M_{\varphi}f\|_{\mathcal{M}_{q}^{\lambda}}$ is independent of the choice of the function $\varphi$. As a consequence of Definition \ref{defHM}, the Hardy-Morrey spaces cover the Hardy spaces i.e $ \mathcal{HM}_{q}^{q}(\mathbb{R}^n) = H^{q}(\mathbb{R}^n) $ for $0<q<\infty$
and naturally these spaces are nice substitutes of Morrey spaces and describe a local control of Hardy spaces when $0<q\leq 1$ and $q <\lambda$. By reasons of previous comments, we emphasize our study on $\mathcal{HM}_q^{\lambda}(\R^n)$ with $0<q\leq 1$ and $q\leq \lambda$.

{Analogous to previous maximal functions, we may define the truncated version of} \textit{tangential, non-tangential} and \textit{grand maximal function} for $f \in \mathcal{S}'(\mathbb{R}^n)$ respectively as follows 
\begin{align*}
m_{\varphi}f(x)=\sup_{0<t\leq 1}\left\vert (\varphi_t\ast f)(x)\right\vert, \,\,
m_{\varphi}^{\ast}f(x)=\sup_{\vert x-y\vert<t} \left\vert (\varphi_t\ast f)(y)\right\vert \; \text{and} \;
m_{\mathcal{F}}f(x):=\sup_{\varphi \in \mathcal{S_{F}}} m_{\varphi}f(x),
\end{align*}
and the following nonhomogeneous version of $\mathcal{HM}_{q}^{\lambda}(\mathbb{R}^n)$.
\begin{definition}\label{defhM}
{Let $0<q\leq\lambda<\infty$.} We say that a tempered distribution $f \in \mathcal{S}'(\mathbb{R}^{n})$ belongs to $h\mathcal{M}_q^{\lambda}(\mathbb{R}^{n})$,  called nonhomogeneous (or localizable) Hardy-Morrey spaces, if there exists $\varphi \in \mathcal{S}(\mathbb{R}^{n})$ with  $\int_{\mathbb{R}^n}\varphi(x) dx \neq 0$ such that  
 $m_{\varphi}f\in \mathcal{M}_q^{\lambda}(\mathbb{R}^{n})$. 
 \end{definition}
The functional $\|f\|_{h\mathcal{M}_q^{\lambda}}:=\|m_{\varphi}f\|_{\mathcal{M}_{q}^{\lambda}}$ defines a quasi-norm as $0<q<1$ 
and the space is a complete metric space. 
Clearly the nonhomogeneous version covers the localizable Hardy spaces due to Goldberg \cite{G} i.e. $ h\mathcal{M}_{p}^{p}(\R^n)= h^{p}(\R^n) $,
the continuous inclusion $\mathcal{HM}_q^{\lambda}(\R^n) \hookrightarrow h\mathcal{M}_q^{\lambda}(\R^n)$ holds and  moreover $h\mathcal{M}_{q}^{\lambda}(\R^n)$ coincides with Morrey spaces  as $1<q <\lambda<\infty$. 
An analogous characterization for $h\mathcal{M}_q^{\lambda}(\mathbb{R}^{n})$, as Theorem \ref{thm-eqHardy}, can be stated with suitable adaptation for truncated maximal functions



The next assertion provides a way of transferring properties from $\mathcal{HM}_q^{\lambda}(\R^n)$ to the corresponding localizable space $h\mathcal{M}_q^{\lambda}(\R^n)$. This lemma was first announced in \cite[Proposition 4.8]{Sawano}, and here we present only some steps of the proof by convenience.

\begin{lemma}[\cite{Sawano}]\label{lem-glob_loc}
Let $\psi\in \mathcal{S}(\mathbb{R}^n)$, $\int_{\mathbb{R}^n} \psi(x) dx=1$ and $\int_{\mathbb{R}^n} x^{\alpha}\psi(x)dx=0$ for all multi-index $\alpha\in\mathbb{N}^n_0$ such that  $|\alpha|\neq 0$. Then there exists $C>0$  such that 
	$$\Vert f-\psi\ast f\Vert_{\mathcal{HM}_{q}^{\lambda}}\leq C\Vert f\Vert_{h\mathcal{M}_{q}^{\lambda}},$$
	for all $f\in h\mathcal{M}_{q}^{\lambda}(\R^{n})$ with  $0<q\leq 1$ and $q\leq\lambda<\infty$. 
\end{lemma}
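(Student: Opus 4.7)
The strategy is to reduce the desired inequality, via the grand-maximal characterizations of Theorem \ref{thm-eqHardy} and Theorem \ref{thm-hM}, to a pointwise majorization of the untruncated maximal function of $f-\psi*f$ by an auxiliary Hardy--Littlewood average of the truncated maximal function of $f$. Concretely, I aim to establish
\[
M_\varphi(f-\psi*f)(x) \leq C\,\bigl(M[(m_{\mathcal{F}} f)^r](x)\bigr)^{1/r}, \qquad x \in \R^n,
\]
for some $r \in (0,q)$ and a sufficiently large finite collection $\mathcal{F}$ of Schwartz seminorms, where $M$ is the Hardy--Littlewood maximal operator. Taking $\|\cdot\|_{\mathcal{M}_q^\lambda}$ on both sides, using $\|h^{1/r}\|_{\mathcal{M}_q^\lambda} = \|h\|_{\mathcal{M}_{q/r}^{\lambda/r}}^{1/r}$ and the boundedness of $M$ on $\mathcal{M}_{q/r}^{\lambda/r}$ (valid because $q/r > 1$), then delivers $\|M_\varphi(f-\psi*f)\|_{\mathcal{M}_q^\lambda}\lesssim \|m_{\mathcal{F}}f\|_{\mathcal{M}_q^\lambda}\simeq \|f\|_{h\mathcal{M}_q^\lambda}$.

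Split the defining supremum of $M_\varphi$ at $t=1$. For $t\le 1$, the decomposition $\varphi_t*(f-\psi*f)=\varphi_t*f-(\varphi_t*\psi)*f$ handles both summands: the first is trivially bounded by $m_\varphi f(x)$, while a direct Leibniz-type estimate based on $|x|^{|\alpha|}\le 2^{|\alpha|}(|x-y|^{|\alpha|}+|y|^{|\alpha|})$ and $\|u^\gamma \varphi_t\|_{L^1}\lesssim t^{|\gamma|}\le 1$ shows that the family $\{\varphi_t*\psi:0<t\le 1\}$ is uniformly bounded in $\mathcal{S}(\R^n)$, so after a normalization into $\mathcal S_{\mathcal F}$ one obtains $|(\varphi_t*\psi)*f(x)|=|(\varphi_t*\psi)_1*f(x)|\lesssim m_{\mathcal F}f(x)$.

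The main obstacle is the range $t>1$. Setting $\eta_t:=\varphi_t-\varphi_t*\psi$, the change of variables $y=tu$ in its convolution representation yields the scaling identity $\eta_t=[\widetilde\eta^{(t)}]_t$ with $\widetilde\eta^{(t)}:=\varphi-\varphi*\psi_{1/t}$ and $\psi_{1/t}(z):=t^n\psi(tz)$. Since every positive-order moment of $\psi_{1/t}$ vanishes, an $N$-th order Taylor expansion of $\varphi$ against $\psi_{1/t}$ produces, for every pair of multi-indices $(\alpha,\beta)$ and every $N\in\N$,
\[
\|\widetilde\eta^{(t)}\|_{\alpha,\beta}\le C_{\alpha,\beta,N}\,t^{-N},\qquad t\ge 1.
\]
Thus $C^{-1}t^{N}\widetilde\eta^{(t)}\in\mathcal{S}_{\mathcal F}$ uniformly in $t\ge 1$, and writing $\eta_t*f(x)=C\,t^{-N}(\phi^{(t)})_t*f(x)$ with $\phi^{(t)}\in\mathcal{S}_{\mathcal F}$, the Schwartz decay $|(\phi^{(t)})_t(u)|\le C_M t^{-n}(1+|u|/t)^{-M}$ combined with the standard bump-function inequality
\[
\int t^{-n}\Bigl(1+\frac{|x-z|}{t}\Bigr)^{-M}|g(z)|\,dz \le C\bigl(M[|g|^r](x)\bigr)^{1/r}\qquad(M>n/r),
\]
applied to a non-tangential maximal majorant of $f$ controlled by $m_{\mathcal F}f$, gives $|\eta_t*f(x)|\le C\,t^{-N}(M[(m_{\mathcal F}f)^r](x))^{1/r}$; since $t^{-N}\le 1$ for $t>1$, the pointwise bound on the $t>1$ regime follows.

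The key and main difficulty is precisely the arbitrary-order smallness $t^{-N}$ for $t>1$: it reflects the \emph{infinite}-order vanishing of the moments of $\psi$ (equivalently of $1-\widehat\psi(\xi)$ at $\xi=0$) and has no finite-order analogue. This is what permits the $t>1$ regime, which lies outside the truncated scale range of $h\mathcal{M}_q^\lambda$, to be absorbed by the truncated maximal function of $f$ and hence proves the lemma.
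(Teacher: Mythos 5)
Your decomposition at $t=1$ and your treatment of the range $0<t\le 1$ coincide with the paper's proof, which likewise bounds $(\varphi_t\ast\psi)\ast f$ by $m_{\mathcal{F}}f$ after noting that $\{\varphi_t\ast\psi\}_{0<t\le1}$ is bounded in $\mathcal{S}(\R^n)$. The problem lies in the range $t>1$. After writing $\eta_t=[\widetilde\eta^{(t)}]_t$ with $\|\widetilde\eta^{(t)}\|_{\alpha,\beta}\le C_{\alpha,\beta,N}t^{-N}$ (which is correct), you are left with a convolution at the \emph{large} scale $t>1$, outside the truncated range, and you propose to control it by $\int t^{-n}(1+|x-z|/t)^{-M}g(z)\,dz\le C\bigl(M[g^r](x)\bigr)^{1/r}$ with $r<q\le1$ and $g$ a majorant such as $m_{\mathcal{F}}f$. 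That inequality is false for general nonnegative $g$ when $r<1$: testing on $g=\mathds{1}_{B(x_0,\eps)}$ at a point with $|x-x_0|=d>t>\eps$ gives a left side of order $\eps^{n}t^{M-n}d^{-M}$ against a right side of order $(\eps/d)^{n/r}$, and the ratio blows up as $\eps\to0$ since $n-n/r<0$. The genuine Fefferman--Stein estimate with exponent $r<1$ concerns tangential maximal functions and exploits that $m^{\ast}_{\varphi}f(z)\ge|\varphi_t\ast f(y)|$ on the whole ball $|z-y|<t$ at a \emph{single} scale $t\le1$; it does not apply to an $L^1$-average of a generic majorant at scales $t>1$. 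Retreating to $r=1$ gives only $M(m_{\mathcal{F}}f)(x)$, and the Hardy--Littlewood operator is not bounded on $\mathcal{M}_q^{\lambda}(\R^n)$ for $q\le1$, so the reduction does not close. This step is a genuine gap as written.

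The repair is short, because your own seminorm bound is stronger than what you extract from it. From $\|u^{\alpha}\partial^{\beta}\widetilde\eta^{(t)}\|_{\infty}\le C_{\alpha,\beta,N}t^{-N}$ for every $N$, undoing the dilation gives
\begin{equation*}
\|x^{\alpha}\partial^{\beta}\eta_t\|_{\infty}=t^{|\alpha|-|\beta|-n}\,\|u^{\alpha}\partial^{\beta}\widetilde\eta^{(t)}\|_{\infty}\le C_{\alpha,\beta}\qquad(t\ge1),
\end{equation*}
upon choosing $N\ge|\alpha|$. Hence $\{\varphi_t-\varphi_t\ast\psi\}_{t>1}$ is a bounded subset of $\mathcal{S}(\R^n)$ \emph{as undilated functions}, so $\eta_t\ast f=(\eta_t)_1\ast f$ is a scale-one convolution and $|\eta_t\ast f(x)|\lesssim m_{\mathcal{F}}f(x)$ directly. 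This is exactly the paper's argument (via \cite[Lemma 4]{G}), and it also makes your detour through $\bigl(M[(m_{\mathcal{F}}f)^r]\bigr)^{1/r}$ and the boundedness of $M$ on $\mathcal{M}_{q/r}^{\lambda/r}(\R^n)$ unnecessary.
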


\begin{proof}  Given $f\in h\mathcal{M}_{q}^{\lambda}(\R^n)$ we can estimate 
\small{\begin{align*}
\Vert f-\psi\ast f\Vert_{\mathcal{HM}_{q}^{\lambda}}
&\lesssim  \left\Vert \sup_{0<t\leq 1}\left\vert \varphi_t\ast f\right\vert\right\Vert_{\mathcal{M}_{q}^{\lambda}} +\;\;\left\Vert \sup_{0<t\leq 1}\left\vert (\varphi_t\ast \psi)\ast f\right\vert\right\Vert_{\mathcal{M}_{q}^{\lambda}}
+\;\,\left\Vert\sup_{t>1}\left\vert  (\varphi_t-\varphi_t\ast\psi)\ast f\right\vert\right\Vert_{\mathcal{M}_{q}^\lambda}\nonumber\\
&:= \Vert f\Vert_{h\mathcal{M}_q^{\lambda}} + I_1(f) +I_2(f).
\end{align*}}
Since  $\left\{ \varphi_{t} \ast \psi \right\}_{t \leq 1}$ and $\left\{ \varphi_{t} - \varphi_{t} \ast \psi \right\}_{t>1}$ are bounded on $\mathcal{S}(\R^{n})$ (see \cite[Lemma 4]{G}), then 
\begin{equation}
\vert (\varphi_{t} \ast \psi)\ast f\vert  \lesssim m_{\mathcal{F}}f\; \text{ for  } \; t\leq 1 \quad \text{ and } \quad |(\varphi_{t}-\varphi_{t} \ast \psi)\ast f\vert \lesssim m_{\mathcal{F}}f \;\text{ for } \; t>1, \nonumber
\end{equation} 
which yields $I_j(f)\lesssim  \|m_{\mathcal{F}}f\|_{\mathcal{M}_q^{\lambda}} \lesssim \|f\|_{h\mathcal{M}_q^{\lambda}}$ for $j=1,2$ as we wished to show. 
\end{proof}


\subsection{Some properties of Hardy-Morrey spaces}

The next two lemmas are well known and we given a proof just for read convenience.

\begin{lemma}\label{calor} Let $0<q \leq \lambda<\infty$ and $0<p \leq \gamma<\infty$ be such that $p/\gamma \,=\,q/\lambda$. If $f\in\mathcal{HM}_{q}^{\lambda}(\R^n)$ and $\lambda \leq  \gamma$ then there exists $C>0$ independent of $f$ such that 
	\begin{equation}\label{regular-estimate}
\|\varphi_{t}\ast f\|_{\mathcal{M}_{p}^{\gamma} }\leq C\, t^{n\left(\frac{1}{\gamma}-\frac{1}{\lambda}\right)} \|f \|_{\mathcal{HM}_{q}^{\lambda}}, \quad \quad  \; t>0,
\end{equation}
provided $\varphi\in\mathcal{S}(\mathbb{R}^n)$ and  $\int_{\mathbb{R}^n}\varphi(x) dx \neq 0$.
\end{lemma}

\begin{proof} 
By inequality \eqref{maximalop1} we can infer 
	\begin{equation} \label{control1}
	\|\varphi_t\ast f\|_{\mathcal{M}^{\lambda}_{q}}\leq \|M^{\ast}_{\varphi}f\|_{\mathcal{M}^{\lambda}_{q}} \lesssim \|f\|_{\mathcal{HM}^{\lambda}_{q}}, \quad \quad t>0. 
	\end{equation}
Now let $Q_t$ be a cube with sidelength $\ell_{Q_{t}}=t$, then 
	\begin{equation}\label{control2}
	\|\varphi_{t}\ast f\|_{\infty} \lesssim\left( \frac{1}{|Q_{t}|}\int_{Q_{t}}|M^{\ast}_{\varphi}f(y)|^{q}dy \right)^{1/q}\lesssim |Q_t|^{-\frac{1}{\lambda}}\|f\|_{\mathcal{HM}^{\lambda}_{q}}\lesssim t^{-\frac{n}{\lambda}}\|f\|_{\mathcal{HM}^{\lambda}_{q}},
	\end{equation}
where in the first inequality we used the pointwise control  $M_{\varphi}f(x)\leq M^{\ast}_{\varphi}f(x)\nonumber$ for all $|x-y|<t$. Now, from {convexity of Morrey spaces} {using}  \eqref{control1} and \eqref{control2} we obtain the desired estimate provided $\lambda \leq \gamma$ satisfy   $p\lambda=q\gamma$.
	\end{proof}

Taking $p=1$ at \eqref{regular-estimate} and recalling $\gamma=\lambda/q \geq 1$ we conclude 
\begin{equation} \label{2.8-heat}
\|\varphi_{t}\ast f\|_{\mathcal{M}_{1}^{\gamma} }\leq C\, t^{-\frac{n}{\lambda}(1-q)} \|f \|_{\mathcal{HM}_{q}^{\lambda}}, \quad \quad  \; t>0.
\end{equation} 
Analogous to the Hardy spaces, moment conditions play a fundamental role in the theory of Hardy-Morrey spaces. The Fourier transform decaying (\ref{decay-Fourier})  {implies} that restriction 
$
\int_{\R^{n}}f(x)dx=0
$  is necessary for a bounded, compactly supported function $f$ belong to Hardy-Morrey space $\mathcal{HM}^\lambda_{q}(\R^n)$ as $0<q\,\leq\lambda<1$. Bootstrapping the previous argument we may obtain 
 \begin{align}\label{moment}
\int_{\R^{n}} x^{\alpha}f(x)dx=0\; \text{ for }\; |\alpha|\leq \left\lfloor n\left({1}/{\lambda}-1\right)\right\rfloor
\end{align}
where $x^\alpha=x_1^{\alpha_1}\cdots x_n^{\alpha_n}$ for $\alpha \in \N^{n}_0$ and $\lfloor \cdot \rfloor$ denotes the floor function. This property {shows us} that  Hardy-Morrey spaces for $0<q\leq \lambda <  1$ are not closed by multiplication on test functions. Indeed, let  $f$ be as in Proposition \ref{prop2.5}. Then $\int_{\R^n}f(x)dx=0$. Hence, for all $\varphi \in C_{c}^{\infty}(\R^n)$ such  that $\varphi f \in \mathcal{HM}^\lambda_{q}(\R^n) $  we conclude that $\int_{\R^n}f(x)\varphi(x)dx=0$ which is a contradiction, since $f\not\equiv0$. The necessity of moment conditions for bounded and compactly supported functions on  $\mathcal{HM}^\lambda_{q}(\R^n)$ with  $0<q\leq 1$ and $q\leq\lambda$ were previously remarked in the works \cite{Tanaka,HH} in the setting of  atomic decomposition theorems (see for instance Theorem \ref{atomich} below). Next we present an alternative and elementary proof for this claim. 

\begin{proposition}\label{prop3.4} Let $0<q\leq 1$ and $q\leq\lambda<\infty$.  A bounded, compactly supported function $f$
satisfying the moment condition 
 \begin{align}\label{moment_2}
\int_{\R^{n}} x^{\alpha}f(x)dx=0\; \text{ for }\; |\alpha|\leq L \,\,\,\,  \text{with}\;\; L\geq 
N_{q}:=\left\lfloor n\left({1}/{q}-1\right)\right\rfloor,
\end{align}
belongs to $\mathcal{HM}_q^{\lambda}(\R^n)$ and moreover $\|f\|_{\mathcal{HM}_{q}^{\lambda}} \lesssim \|f\|_{L^{\infty}}|Q|^{1/ \lambda}$ for all cube $Q\supseteq \text{supp}(f)$. 
\end{proposition}


\begin{proof}Let  $f$ be a bounded function compactly supported in a cube $Q=Q(x_{Q},\ell_{Q})$ and let $Q^{\ast}=2Q$ be the double cube of $Q$ with sidelength  $\ell_{Q^{\ast}}=2\ell_Q$ and center $x_{Q^{\ast}}=x_Q$.  Let $J$ be a fixed cube, then for previous moment condition \eqref{moment_2} (see \cite[p.106]{S} 
we have
\begin{align}\label{2.8xx}
M_{\varphi}f(x)\lesssim \mathds{1}_{J\cap Q^{\ast}}(x) M_{\varphi}f(x)+ \mathds{1}_{J\backslash Q^{\ast}}(x)\|f\|_{L^{\infty}} \left(\frac{1}{1+\vert x-x_Q\vert/\ell_Q}\right)^{n+L+1},
\end{align}
where $\mathds{1}_A(x)$ denotes the indicator function of a subset $A$. Choosing a suitable $\varphi$ one has $M_{\varphi}f(x) \leq \| \varphi\|_{L^{1}}Mf(x)$ for $x\in \R^n$ almost everywhere, where $M$ denotes the Hardy-Littlewood maximal function which is bounded in $L^{r}(\R^{n})$ for $1<r \leq \infty$. It follows by H\"older inequality that 
\begin{align}
|J|^{q/\lambda-1} \int_{J \cap Q^{\ast}}\vert M_{\varphi}f(x)\vert^qdx 
&\leq |J|^{q/\lambda-1}\left(\int_{J \cap Q^{\ast}}\vert M_{\varphi}f(x)\vert^r dx\right)^{{{q}}/{r}} |J \cap Q^{\ast}|^{1-{q}/{r}}\nonumber\\
&\lesssim \left(\int_{\R^n}\vert f(x)\vert^r dx\right)^{{q}/{r}} |J|^{q/\lambda-1} |J \cap Q^{\ast}|^{1-{q}/{r}}   \nonumber\\
&\leq   \|f\|^{q}_{L^{\infty}} |Q|^{q/r}  |J|^{q/\lambda-1} |J \cap Q^{\ast}|^{1-{q}/{r}} \label{key-est-1} \\
&\lesssim  \|f\|^q_{L^{\infty}}  |Q|^{q/\lambda},\nonumber
\end{align}
where the last inequality will be justified as follows: if $|Q|<|J|$ then
$$  |Q|^{q/r}  |J|^{q/\lambda-1} |J \cap Q^{\ast}|^{1-{q}/{r}}  \leq  |Q|^{q/r}  |Q|^{q/\lambda-1} |Q^{\ast}|^{1-{q}/{r}} \lesssim |Q|^{q/\lambda}, $$
since $q/\lambda-1\leq 0$ and $1-q/r \geq 0$, however if $|J|\leq|Q|$ then
\begin{align*}
|Q|^{q/r}  |J|^{q/\lambda-1} |J \cap Q^{\ast}|^{1-{q}/{r}} 
=  \left(\frac{|J|}{|Q|} \right)^{q/\lambda-q/r} \left(\frac{|J \cap Q^{\ast}|}{|J|} \right)^{1-q/r}  |Q|^{q/\lambda}  
 \leq  |Q|^{q/\lambda}, 
\end{align*}
where we choose $r>\lambda $.  

{Since $x\in J\backslash Q^{\ast}$ satisfies $\vert x-x_Q\vert\geq 2\ell_{Q}$}, from \eqref{2.8xx} and assuming  $\vert J\vert >\vert Q\vert$ we have, 
\begin{align}
\vert J\vert^{{q}/{\lambda}-1}\int_{J\backslash Q^{\ast}}|M_{\varphi}(f)(x)|^{q}  dx
	&\lesssim  \vert J\vert^{{q}/{\lambda}-1} \| f \|^{q}_{L^\infty} \int_{\vert x-x_Q\vert \geq 2\ell_{Q}}\left(1+\frac{\vert x-x_Q\vert}{\ell_Q}\right)^{-(n+L+1)q}dx \nonumber\\
	&\lesssim \vert J\vert^{{q}/{\lambda}-1} \| f \|^{q}_{L^\infty} \ell_Q^{(n+L+1)q} \int_{2\ell_{Q}}^{\infty}r^{-q(n+L+1)+n-1}dr\nonumber\\
&\lesssim\| f \|^{q}_{L^\infty}  \vert Q\vert^{{q}/{\lambda}},  \nonumber 
\end{align}
in view of $L\geq \left\lfloor n\left({1}/{q}-1\right)\right\rfloor$. The case  $\vert J\vert \leq \vert Q\vert$ we  easily have the estimate
	\begin{align}
	\vert J\vert^{{q}/{\lambda}-1} \int_{J\backslash Q^{\ast}}|M_{\varphi}(f)(x)|^{q} dx 	&\lesssim  \vert J\vert^{{q}/{\lambda}-1} \|f\|^{q}_{L^{\infty}}  \int_{J\backslash Q^{\ast}}\left(1+\frac{\vert x-x_Q\vert}{\ell_Q}\right)^{-(n+L+1)q}dx  \nonumber\\
	&\lesssim \|f\|^{q}_{L^{\infty}}   \vert J\vert^{{q}/{\lambda}-1}  \vert J\backslash Q^{\ast} \vert \nonumber \\
	&\lesssim \|f\|^{q}_{L^{\infty}} |Q|^{q/\lambda}. \nonumber
	\end{align}
Combining the previous controls, we obtain  
$\|f\|_{\mathcal{HM}_{q}^{\lambda}} \lesssim \|f\|_{L^{\infty}}|Q|^{1/ \lambda}$. In particular,  $f \in \mathcal{HM}_q^{\lambda}(\R^n)$.
\end{proof}

\subsection{Atomic decomposition  in Hardy-Morrey spaces}
\begin{definition}[\cite{HH}]\label{Hatom}  Let $0<q\leq 1$ and  $q \leq \lambda<\infty$. A function $a$   mensurable is called {\it\textnormal a} bounded  $\mathcal{HM}_q^{\lambda}(\mathbb{R}^{n})-$ atom or  $(q,\lambda,\infty)-$atom if 
	\begin{equation}
		(i)\;\text{supp}(a)\subset Q\quad\quad  (ii)\;\Vert a\Vert_{L^\infty}\leq \vert Q\vert ^{-{1}/{\lambda}} \quad \text{ and }\quad (iii)\;\int_{\mathbb{R}^n} x^{\alpha}a(x)dx =0, \nonumber
	\end{equation}
	for multi-index $\alpha\in\mathbb{N}_0^n$ with  $\vert\alpha\vert\leq  N_{q}$.
\end{definition}

To fix the dependence on support of $a$ {we write} $a=a_{Q}$ for every $(q,\lambda,\infty)-$atom supported in a cube $Q$. It follows directly from Proposition \ref{prop3.4} that $\|a_{Q}\|_{\mathcal{HM}_q^{\lambda}}\lesssim 1$.

We denote the atomic space $\textbf{at}\mathcal{HM}_{q}^{\lambda}(\R^n)$, also called atomic space of $\mathcal{HM}_{q}^{\lambda}(\R^n)$,  by the collection of  distributions $f\in\mathcal{S}'(\mathbb{R}^n)$ such that 
\begin{equation}\nonumber 
f =\sum_{Q\,:\, \text{dyadic}}s_{Q}a_{Q} \,\, \text{ in } \,\, {\mathcal{S}'(\mathbb{R}^n)},
\end{equation}
where $\{a_Q\}_Q$ are  $(q,\lambda,\infty)$-atoms and $\{s_Q\}_{Q}$ are complex scalars satisfying  
\begin{equation}\nonumber
\Vert \{s_Q\}_Q\Vert_{{\lambda,q}}:=\sup_{J} \left\{\left({\vert J\vert^{q/\lambda-1}} \sum_{\substack{Q\subseteq J}}\left(\vert Q\vert^{{1}/{q}-{1}/{\lambda}}\,\vert s_{Q}\vert\right)^q \right)^{{1}/{q}}\right\}<\infty.
\end{equation}
The functional 
\begin{equation}
\Vert f \Vert_{\textbf{at}\mathcal{HM}_{q}^{\lambda}}:= \inf \left\{  \Vert \{s_Q\}_{Q}\Vert_{{\lambda,q}} : f =\sum_{Q}s_{Q}a_{Q} \right\}\nonumber
\end{equation}
defines a {quasi-norm} for the space usually called  {quasi-atomic norm}. 
 Jia and Wang stated the following atomic decomposition theorem.
\begin{theorem}[\cite{HH}]\label{atomich}
	Let $0< q \leq1$, $q<\lambda<\infty$ and $f\in \mathcal{HM}_q^{\lambda}(\R^{n})$. Then there is a sequence $\{a_Q:\;  Q\text{ dyadic}\}$ of $(q,\lambda,\infty)-$atoms and $\{s_{Q}\}_{Q}$ complex numbers   such that 
	\begin{equation}\label{conver}
	f =\sum_{Q\,:\, \text{dyadic}}s_{Q}a_{Q} \;\text { in }\; \mathcal{S}'(\mathbb{R}^n)\; \text{ and }\; \Vert f \Vert_{\textbf{at}\mathcal{HM}_{q}^{\lambda}}\lesssim \Vert f\Vert_{\mathcal{HM}_q^\lambda}.
	\end{equation} 
Conversely, given a sequence of 
 $(q,\lambda,\infty)-$atoms $\{a_Q:\; Q \text{ dyadic}\}$ and  
 complex numbers $\{s_{Q}\}_{Q}$ such that $f =\sum_{Q}s_{Q}a_{Q}$ in  $\mathcal{S}'(\mathbb{R}^n)$  satisfy {$\Vert \left\{ s_{Q} \right\}_Q \Vert_{\lambda,q}<\infty\,$} 
then $f \in \mathcal{HM}_q^\lambda(\R^n)$ and 
$\left\Vert f\right\Vert_{\mathcal{HM}_q^\lambda} \lesssim \Vert f \Vert_{\textbf{at}\mathcal{HM}_{q}^{\lambda}}$.
\end{theorem}

The previous statement recover the atomic decomposition for Hardy spaces (i.e. $q=\lambda$) with peculiar difference that the decomposition at \eqref{conver} in general does not converge in $\mathcal{HM}_q^\lambda(\R^n)$ norm (see \cite[Remark 3.6]{HH}). 
In particular, several properties obtained uniformly by $(q,\lambda,\infty)-$atoms can not be extended easily for $\mathcal{HM}_{q}^{\lambda}(\R^n)$. 

\begin{definition}\label{defatom}Let $0<q\leq 1$ and $ q \leq \lambda<\infty$. A  measurable function $a_Q$ supported in a cube  $Q$ is called a bounded $h\mathcal{M}_q^{\lambda}-$atom, if it  satisfies 
$ \Vert a_Q\Vert_{L^\infty}\leq \vert Q\vert ^{-{1}/{\lambda}} $ and 
\begin{equation}\label{cond-hM-atom}
\int_{\mathbb{R}^n} a_Q(x)x^{\alpha}dx =0
\end{equation}
for all dyadic cube $|Q|<1$ and multi-index $\alpha\in\mathbb{N}_0^n$ with  $\vert\alpha\vert\leq  N_{q}$. 
\end{definition}

In the previous definition  we require vanish moments only for atoms supported on small cubes $Q$ satisfying $|Q|<1$, the unique difference with $\mathcal{HM}_q^\lambda-$atoms. We say that $a_{Q}$ is a rough $h\mathcal{M}_q^{\lambda}-$atom or \textit{$(\infty,\lambda)-$block}, if $a_{Q}$ is a measurable function supported in the cube $Q$ with sidelength $\ell_Q\geq 1$ and $\Vert a_{Q}\Vert_{L^\infty(\R^n)}\leq \vert Q\vert^{-1/\lambda}$.

\begin{lemma}\label{lemmaatomic} If $a_{Q}$ is {an} $h\mathcal{M}_q^{\lambda}-$atom, then $\|a_Q\|_{h\mathcal{M}_q^{\lambda}} \lesssim 1$ with implicit constant independent of $a_{Q}$.
\end{lemma}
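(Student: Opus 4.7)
The plan is to split into two complementary cases according to whether $|Q|<1$ (so moment conditions are imposed) or $|Q|\geq 1$ (the rough/block case). In both cases the goal is to estimate $\|m_{\varphi}a_{Q}\|_{\mathcal{M}_{q}^{\lambda}}$ uniformly in $a_{Q}$.

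When $|Q|<1$, the atom $a_{Q}$ is bounded, compactly supported and satisfies the full moment condition $\int x^{\alpha}a_{Q}(x)\,dx=0$ for $|\alpha|\leq N_{q}$. This is precisely the hypothesis of Proposition \ref{prop3.4}, which yields
\begin{equation*}
\|a_{Q}\|_{\mathcal{HM}_{q}^{\lambda}}\lesssim \|a_{Q}\|_{L^{\infty}}|Q|^{1/\lambda}\leq |Q|^{-1/\lambda}|Q|^{1/\lambda}=1.
\end{equation*}
Since the continuous inclusion $\mathcal{HM}_{q}^{\lambda}(\R^{n})\hookrightarrow h\mathcal{M}_{q}^{\lambda}(\R^{n})$ was recorded just after the definition of $h\mathcal{M}_{q}^{\lambda}$ (via the trivial bound $m_{\varphi}f\leq M_{\varphi}f$), we conclude $\|a_{Q}\|_{h\mathcal{M}_{q}^{\lambda}}\lesssim 1$ in this range.

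When $|Q|\geq 1$ no cancellation is assumed, so I would estimate $m_{\varphi}a_{Q}$ directly. For every $0<t\leq 1$ one has the pointwise bound $|\varphi_{t}\ast a_{Q}(x)|\leq \|\varphi\|_{L^{1}}\|a_{Q}\|_{L^{\infty}}\lesssim |Q|^{-1/\lambda}$, and the support of $\varphi_{t}\ast a_{Q}$ lies in $Q^{\ast}:=Q+B(0,t)\subset 2Q$ because $t\leq 1\leq \ell_{Q}$. Therefore
\begin{equation*}
m_{\varphi}a_{Q}(x)\lesssim |Q|^{-1/\lambda}\,\mathds{1}_{2Q}(x).
\end{equation*}
Plugging this into the Morrey norm and comparing an arbitrary test cube $J$ with $2Q$ gives
\begin{equation*}
|J|^{q/\lambda-1}\int_{J}|m_{\varphi}a_{Q}|^{q}\lesssim |J|^{q/\lambda-1}|Q|^{-q/\lambda}|J\cap 2Q|,
\end{equation*}
and a two-case check ($|J|\leq |2Q|$ using $|J\cap 2Q|\leq |J|$, and $|J|>|2Q|$ using $|J\cap 2Q|\lesssim |Q|$) shows this is $\lesssim 1$, exactly as in the final estimate of the proof of Proposition \ref{prop3.4}; the exponents $q/\lambda-1\leq 0$ and $1-q/\lambda\geq 0$ make each case elementary.

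The only mildly delicate point is organizational: the two cases use genuinely different tools (Proposition \ref{prop3.4} together with the globalization inclusion in the small-cube case, versus a direct truncated-maximal computation in the large-cube case), so the proof is short but must cover both. I would present it compactly, noting that the truncation $0<t\leq 1$ is what makes the large-cube case work without moment conditions, since it both confines the support and absorbs the $L^{\infty}$ loss.
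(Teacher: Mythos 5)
Your proof is correct and follows essentially the same route as the paper: reduce to the rough ($\ell_{Q}\geq 1$) case via Proposition \ref{prop3.4}, then exploit that the truncation $0<t\leq 1$ confines $\mathrm{supp}(\varphi_{t}\ast a_{Q})$ to a fixed dilate of $Q$ and run the two-case comparison of $J$ against that dilate. The only (harmless) difference is that you use the pointwise bound $m_{\varphi}a_{Q}\lesssim\|a_{Q}\|_{L^{\infty}}$ directly, whereas the paper passes through H\"older with a finite exponent $r$ and the $L^{r}$-boundedness of the maximal function before invoking \eqref{key-est-1}; your version is the $r=\infty$ simplification of the same estimate.
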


\begin{proof} Let $a_{Q}$ be an $h\mathcal{M}_q^{\lambda}-$atom supported in a cube $Q$. From Proposition \ref{prop3.4} it is sufficient consider that $a_{Q}$ is a  rough $h\mathcal{M}_q^{\lambda}-$atom. Let $B=B(x_{0},R)$ be a ball that contains Q.
Consider $\varphi \in C_{c}^{\infty}(B(0,1))$  nonnegative such that $\|\varphi\|_{L^{1}}=1$ then $\text{supp}{(\varphi_{t} \ast a_{Q})} \subseteq B(x_0,R+1)$ as $0<t\leq 1$. Since $R>1$ then $B(x_0,R+1)\subset B^{\ast}$ and for a fixed cube $J$ we have
\begin{align*}
|J|^{q/\lambda-1}\int_{J}|m_{\varphi}a_{Q}(x)|^{q}dx&\leq  |J|^{q/\lambda-1}\int_{J \cap B^{\ast}}|m_{\varphi}a_{Q}(x)|^{q}dx \\
&\leq |J|^{q/\lambda-1}  \left(\int_{\R^{n}}|m_{\varphi}a_{Q}(x)|^{r} dx\right)^{q/r}|J \cap B^\ast|^{1-q/r}\\
& \lesssim \|a_{Q}\|^{q}_{L^{r}} |J|^{q/\lambda-1}|J \cap B^\ast|^{1-q/r}\lesssim 1,
\end{align*}
where the last inequality follows from (\ref{key-est-1}).
\end{proof}

We denote the atomic space {$\textbf{at}h\mathcal{M}_{q}^{\lambda}(\R^n)$} 
by the collection of  distributions $f\in\mathcal{S}'(\mathbb{R}^n)$ 
such that 
\begin{equation}
f =\sum_{Q}s_{Q}a_{Q} \text{ in } \mathcal{S}'(\mathbb{R}^n),\nonumber
\end{equation}
where $\{a_Q\}_{Q}$ are  bounded $h\mathcal{M}_q^{\lambda}-$atoms and $\{s_Q\}$ are complex scalar satisfying  
\begin{equation}\label{eq4.5}
\Vert \{s_Q\}_{Q}\Vert_{{\lambda,q}}:=\sup_{J} \left\{\left({\vert J\vert^{q/\lambda-1}} \sum_{\substack{Q\subseteq J}}\left(\vert Q\vert^{{1}/{q}-{1}/{\lambda}}\,\vert s_{Q}\vert\right)^q \right)^{{1}/{q}}\right\}<\infty.
\end{equation}
The functional 
\begin{equation}\nonumber
\Vert f \Vert_{\textbf{at}h\mathcal{M}_{q}^{\lambda}}:= \inf \left\{  \Vert \{s_Q\}_{Q}\Vert_{{\lambda,q}} : f =\sum_{Q}s_{Q}a_{Q} \right\}
\end{equation}
defines a {quasi-norm} for the space usually called {quasi-atomic norm}. Now we ready to announce an atomic decomposition for localizable Hardy-Morrey space $h\mathcal{M}_q^\lambda(\R^n)$.  

\begin{theorem}\label{atomich22}
	Let $0< q \leq1$, $q<\lambda<\infty$ and $f\in h\mathcal{M}_q^{\lambda}(\R^n)$. Then there is a sequence of bounded $h\mathcal{M}_q^{\lambda}-$atoms $\{a_Q: Q \; \text{dyadic}\}$ and a sequence of {complex scalars}   $\{s_{Q}\}_{Q}$ such that 
	\begin{equation} 
	f =\sum_{Q\,:\,dyadic} s_{Q}a_{Q} \;\text { in }\; \mathcal{S}'(\mathbb{R}^n)\; \text{ and }\; \Vert \{s_Q\}_{Q}\Vert_{{\lambda,q}}\lesssim \Vert f\Vert_{h\mathcal{M}_q^\lambda}.
	\end{equation} 
Conversely, for every sequence $\{a_Q\}_{Q}$ of bounded  $h\mathcal{M}_q^{\lambda}-$atoms and scalars 
 $\{s_{Q}\}_{Q}$ satisfying $\Vert \{s_Q\}_{Q}\Vert_{\lambda,q}<\infty$, then ${f=\sum_Q s_{Q}a_{Q} \in h\mathcal{M}_q^\lambda}(\R^n)$ and 
$\left\Vert f\right\Vert_{h\mathcal{M}_q^\lambda} \lesssim \Vert \{s_Q\}_{Q}\Vert_{{\lambda,q}}$.
\end{theorem}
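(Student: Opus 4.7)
The plan is to treat the two implications of Theorem \ref{atomich22} separately: the converse (synthesis) direction reduces to a calculation essentially already carried out in the proof of Corollary \ref{cor-decay}, while the direct (decomposition) direction uses Lemma \ref{lem-glob_loc} as a bridge to the global atomic decomposition of Theorem \ref{atomich}, plus a unit-cube partition to handle the ``large-scale'' piece $\psi\ast f$.

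\textbf{Converse direction.} Assuming $f = \sum_Q s_Q a_Q$ with $\|\{s_Q\}\|_{\lambda,q}<\infty$, I would start from the $q$-subadditivity $(m_\varphi f)^q \leq \sum_Q |s_Q|^q (m_\varphi a_Q)^q$, valid since $0<q\leq 1$, reducing matters to the uniform-in-$J$ estimate
\[
|J|^{q/\lambda-1}\sum_Q |s_Q|^q\,\|m_\varphi a_Q\|_{L^q(J)}^q \lesssim \|\{s_Q\}\|_{\lambda,q}^q.
\]
Splitting $\sum_Q = \sum_{Q\subseteq J}+\sum_{J\subset Q}$ one can recycle the two-case treatment behind \eqref{convergente}: for bounded atoms with moments, the inner control $\|m_\varphi a_Q\|_{L^q(J)}^q \lesssim \|a_Q\|_\infty^q\,|Q|$ appears verbatim in Proposition \ref{prop3.4}; for rough atoms (side length $\geq 1$, no moments), the truncation $t\leq 1$ forces $\supp(m_\varphi a_Q)\subseteq Q+B(0,1)$, whose measure is comparable to $|Q|$, and $\|m_\varphi a_Q\|_\infty \leq \|\varphi\|_{L^1}|Q|^{-1/\lambda}$ suffices to feed the same splitting. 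Finally, $\mathcal{S}'$-convergence of $\sum_Q s_Q a_Q$ is obtained by pairing with a Schwartz $\phi$, using Taylor expansion of $\phi$ at the centre of each small atom together with the moment conditions to make $\sum_Q |s_Q\langle a_Q,\phi\rangle|$ absolutely summable.

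\textbf{Direct direction.} Fix $\psi\in\mathcal{S}(\R^n)$ with $\int\psi=1$ and $\int x^\alpha\psi=0$ for every $\alpha\neq 0$, and split $f=(f-\psi\ast f)+\psi\ast f$. Lemma \ref{lem-glob_loc} gives $\|f-\psi\ast f\|_{\mathcal{HM}_q^\lambda}\lesssim\|f\|_{h\mathcal{M}_q^\lambda}$, so Theorem \ref{atomich} produces $f-\psi\ast f=\sum_Q s_Q a_Q$ in $\mathcal{S}'_\infty(\R^n)$ with $(q,\lambda,\infty)$-atoms (which in particular are bounded $h\mathcal{M}_q^\lambda$-atoms in the sense of Definition \ref{defatom}) and $\|\{s_Q\}\|_{\lambda,q}\lesssim\|f\|_{h\mathcal{M}_q^\lambda}$. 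For the second piece, tile $\R^n$ by a dyadic grid of unit cubes $\{Q_j\}$, set $c_j:=\|\psi\ast f\|_{L^\infty(Q_j)}$ and $b_j:=c_j^{-1}(\psi\ast f)\chi_{Q_j}$: since $|Q_j|=1$ and $\|b_j\|_\infty\leq 1=|Q_j|^{-1/\lambda}$, each $b_j$ is an $(\infty,\lambda)$-block, and the identity $\psi\ast f=\sum_j c_j b_j$ holds pointwise (hence in $\mathcal{S}'(\R^n)$) because $\psi\ast f$ is a locally bounded function. To bound $\|\{c_j\}\|_{\lambda,q}$, I would exploit the pointwise control $|\psi\ast f(x)|\lesssim m_{\mathcal{F}}f(y)$ valid for all $x,y\in Q_j$ (obtained by absorbing the bounded translation $\tau_{x-y}\psi$ into a slightly enlarged class $\mathcal{S}_{\mathcal{F}}$): averaging in $y\in Q_j$ yields $c_j^q\lesssim \int_{Q_j}(m_{\mathcal{F}} f)^q$, and summing over $Q_j\subseteq J$ combined with Theorem \ref{thm-hM} gives $\|\{c_j\}\|_{\lambda,q}\lesssim\|f\|_{h\mathcal{M}_q^\lambda}$. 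Since the joint norm of the two coefficient sequences satisfies $\|\{s_Q\}\cup\{c_j\}\|_{\lambda,q}^q\leq\|\{s_Q\}\|_{\lambda,q}^q+\|\{c_j\}\|_{\lambda,q}^q$ (by the $q$-triangle inequality applied separately on each cube $J$), the desired norm control follows.

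\textbf{Main obstacle.} The most delicate point is the reconciliation of convergence modes: Theorem \ref{atomich} only provides $\mathcal{S}'_\infty$-convergence, whereas Theorem \ref{atomich22} demands genuine $\mathcal{S}'(\R^n)$-convergence of the assembled decomposition. Since $\sum_j c_j b_j$ converges in $\mathcal{S}'(\R^n)$ to $\psi\ast f$, any residual ambiguity in the first sum lives in the polynomial quotient, i.e.\ $P:=(f-\psi\ast f)-\sum_Q s_Q a_Q$ must be a polynomial. This polynomial belongs to $\mathcal{HM}_q^\lambda(\R^n)$, which is only possible for $P\equiv 0$: indeed, for a polynomial of degree $d\geq 0$, the Morrey-norm scaling forces $|Q|^{n/\lambda+d}$ to be bounded as $|Q|\to\infty$, contradicting $d\geq 0$ and $\lambda<\infty$ unless $P=0$. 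Combined with the absolute-summability estimate already needed for $\mathcal{S}'$-convergence in the converse direction, this upgrades the decomposition to the required $\mathcal{S}'(\R^n)$-convergence and completes the proof.
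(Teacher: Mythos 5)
Your proposal is correct and follows essentially the same route as the paper: the converse direction splits the atoms into moment-bearing atoms on small cubes and rough atoms on cubes of sidelength at least $1$ and runs the same $\sum_{Q\subseteq J}+\sum_{J\subset Q}$ computation, while the direct direction uses Lemma \ref{lem-glob_loc} to feed $f-\psi\ast f$ into Theorem \ref{atomich} and tiles $\psi\ast f$ by unit cubes normalized into $(\infty,\lambda)$-blocks, exactly as in the paper (your $L^\infty(Q_j)$ normalization is equivalent to the paper's $|Q_j|^{-1/q}\|m_\psi^\ast f\|_{L^q(Q_j)}$ bound via the same pointwise maximal-function control). The one place you go beyond the paper is the explicit reconciliation of $\mathcal{S}'_\infty$- versus $\mathcal{S}'$-convergence via the observation that the polynomial residue would have to lie in $\mathcal{HM}_q^\lambda(\R^n)$ and hence vanish; the paper silently upgrades the convergence in \eqref{key2}, so this is a welcome clarification rather than a deviation.
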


\begin{proof} 

Let us start with the second part. Given sequence $\{a_Q\}_{Q}$  of bounded  $h\mathcal{M}_q^{\lambda}-$atoms and $\{s_{Q}\}_{Q}$ satisfying \eqref{eq4.5} we split
\begin{equation}\nonumber
f =\sum_{Q\,:\, \text{dyadic}}s_{Q}a_{Q}={\sum_{{Q, \;\ell_{Q}< 1}}s_{1,Q}\,a_{1,Q}+\sum_{{Q, \;\ell_{Q}\geq 1}}s_{2,Q}\,a_{2,Q}}:=f_{1}+f_{2},
\end{equation}
where $\{a_{1,Q}\}_{Q}$ are $\mathcal{HM}^{\lambda}_{q}-$atoms and $\{a_{2,Q}\}_{Q}$ are rough $h\mathcal{M}^{\lambda}_{q}-$atoms. Clearly 
\begin{equation}\label{quebra}
\Vert \{s_{1,Q}\}_{Q}\Vert_{{\lambda,q}}+ \Vert \{s_{2,Q}\}_{Q}\Vert_{{\lambda,q}} \simeq \Vert \{s_Q\}_{Q}\Vert_{{\lambda,q}}.
\end{equation}
From atomic decomposition Theorem \ref{atomich},  we have $f_{1} \in \mathcal{HM}^{\lambda}_{q}$ and 
\begin{equation}\nonumber
\left\Vert f_1\right\Vert_{h\mathcal{M}_q^\lambda} \leq \left\Vert f_{1}\right\Vert_{\mathcal{HM}_q^\lambda} \lesssim \Vert \{s_{1,Q}\}_{Q}\Vert_{{\lambda,q}}.
\end{equation}
Now fix a dyadic cube $J$ and assume that $f_{2}= \sum_{Q}s_{2,Q}a_{2,Q}$ {that converges as distribution in $\mathcal{S}'(\mathbb{R}^n)$,} 
then 
\begin{align*}
\Big\Vert m_{\varphi}{\Big(\sum_{Q}s_{2,Q}a_{2,Q}\Big)}\Big\Vert^{q}_{L^{q}(J)} \leq
 \sum_{Q} |s_{2,Q}|^{q} \int_{J \cap Q^{\ast}}|m_{\varphi}(a_{2,Q})|^{q}dx 
& \leq \sum_{Q} |s_{2,Q}|^{q}  \|a_{2,Q}\|^{q}_{L^{\infty}} |J \cap Q^{\ast}| \\
& \leq \sum_{Q} |s_{2,Q}|^{q}  |Q|^{-q/\lambda} |J \cap Q^{\ast}|,
\end{align*}
where $\text{supp}(m_{\varphi}(a_{2,Q})) \subseteq Q^{\ast}=2Q$, since {$\ell_Q\geq 1$}. Fixed a cube $J$ we split the previous sum into  $\sum_{Q}=\sum_{Q\subseteq J}+\sum_{J\subset Q}$.  For the first sum we have 
\begin{equation}\nonumber
 \sum_{Q \subseteq J} |s_{2,Q}|^{q}  |Q|^{-q/\lambda} |J \cap Q^{\ast}| \lesssim  \sum_{Q \subseteq J} |s_{2,Q}|^{q}  |Q|^{1-q/\lambda}\lesssim \vert J\vert ^{1 - q/\lambda} \Vert \{s_{2,Q}\}_Q\Vert_{{\lambda,q}}^q
\end{equation}
and the second sum is bounded by 
\begin{equation}\nonumber
 \sum_{J \subset Q} |s_{2,Q}|^{q}  |Q|^{-q/\lambda} |J \cap Q^{\ast}| \leq  |J|^{1-q/\lambda} \sum_{J \subset Q} |s_{2,Q}|^{q}  (\vert J\vert/\vert Q\vert)^{q/\lambda}.
\end{equation}
Fixed $J$ dyadic  there exists a subset $\tilde{N} \subseteq \N$ such that each cube $J \subset Q$ associated to rough atom in the decomposition given of $f_{2}$ is uniquely determined by $Q:=Q_{k,J} $ for $k \in \tilde{N}$ such that $\ell_{Q_{k,J}}=2^{k}\ell_{J}$. Then
\begin{align}
 \sum_{J \subset Q} |s_{2,Q}|^{q}  (\vert J\vert / \vert Q\vert)^{q/\lambda} &=  \sum_{k \in \tilde{N}} |s_{2,Q_{k,J}}|^{q}  2^{-knq/\lambda} \nonumber\\
 & \leq \sup_{k \in \tilde{N}} \left\{ |s_{2,Q_{k,J}}|^{q} \right\} \sum_{k} 2^{-knq/\lambda}\nonumber\\
  & \lesssim \sup_{k \in \tilde{N}} \left\{ |Q_{k,J}|^{q/\lambda-1}  |Q_{k,J}|^{1-q/\lambda} |s_{2,Q_{k,J}}|^{q} \right\} \nonumber\\
& \leq  \sup_{k \in \tilde{N}} \left\{ |Q_{k,J}|^{q/\lambda-1}  \sum_{\widetilde{Q}\, \subseteq\, Q_{k,J}} \left(|\widetilde{Q}|^{1/q-1/\lambda}|s_{2,\widetilde{Q}}|\right)^{q} \right\}\nonumber\\
&\lesssim  \|\{s_{2,Q}\}_{Q}\|_{\lambda,q}^{q}\nonumber
\end{align}
uniformly on $J$, where $\widetilde{Q} \subset Q_{k,J}$ in the third inequality is taken on the family of dyadic cubes generators of $f_{2}$.  Combining the previous controls we obtain
\begin{align*}
	\Vert f_{2} \Vert_{h\mathcal{M}_{q}^{\lambda}}
	& \simeq \sup_{J} \left\{ \left( |J|^{{q}/{\lambda}-1} \Big\Vert m_{\varphi}{\Big(\sum_{Q}s_{2,Q}a_{2,Q} \Big)}\Big\Vert^{q}_{L^{q}(J)} \right)^{1/q} \right\}\\
	&\lesssim \sup_{J} \left\{ \left( |J|^{{q}/{\lambda}-1}\sum_{Q }|s_{2,Q}|^{q}|Q|^{-{q}/{\lambda}}|J \cap Q^{\ast}| \right)^{1/q} \right\}\\
	& \lesssim \|\{s_{2,Q}\}_{Q}\|_{\lambda,q}.
	\end{align*}
The conclusion follows from \eqref{quebra}. 
 
Let us moving on to the first part. Suppose $f\in h\mathcal{M}_q^{\lambda}(\R^n)$ and let  $\psi\in\mathcal{S}(\mathbb{R}^n)$ be such that $\int_{\mathbb{R}^n} \psi(x) dx =1$ and  $\int_{\mathbb{R}^n} x^{\alpha}\psi(x) dx =0$ for $0<\vert\alpha\vert\leq N_{q}$. From Lemma \ref{lem-glob_loc}, the map $f\in h\mathcal{M}_q^{\lambda}(\R^n)\mapsto (f-\psi\ast f)\in \mathcal{HM}_{q}^\lambda(\R^n)$ is continuous and thus by Theorem \ref{atomich} we can write  
\begin{align}\label{key2}
	f-\psi\ast f=\sum_{Q\,:\, \text{dyadic}} s_Qa_Q \;\text{ in }\; \mathcal{S}'(\mathbb{R}^n),
\end{align}
where $\{a_Q\}_{Q}$ are $\mathcal{HM}_{q}^{\lambda}$-atoms and the scalars $\{s_Q\}_{Q}$ satisfies $\left\Vert \{s_Q\}_{Q}\right\Vert_{\lambda,q}\lesssim \Vert f-\psi\ast f\Vert_{\mathcal{HM}^{\lambda}_q}\lesssim \Vert f\Vert_{h\mathcal{M}_q^{\lambda}}$.
Now we will decompose $\psi\ast f$ into rough atoms. Indeed, let us write  
\begin{equation}\label{decom-b_Q}
\psi\ast f= \sum_{Q} (\psi\ast f)\mathds{1}_{Q}\,:=\sum_{Q} b_{Q},
\end{equation} 
where $\left\{Q\right\}$ {is the family of unit cubes given by translation of cube $[0,1[^n$, thus a partition of $\R^n$}. 

Given $x\in Q$, the inequality $\vert (\psi\ast f)(x)\vert \leq m^{\ast}_\psi f(y)$ holds for every $y$ satisfying  $|x-y|< t\leq 1$ then
\begin{equation}
\vert b_Q(x)\vert \leq \left(\frac{1}{|Q|}\int_{Q} \left\vert m_{\psi}^{\ast}f(y)\right\vert^qdy\right)^{{1}/{q}}=\; \vert Q\vert^{-1/q}\|m_{\psi}^{\ast} f\|_{L^{q}(Q)}.\nonumber
\end{equation}
For each dyadic cube $Q$ consider 
	\begin{equation}
	\tilde{b}_Q= \frac{\vert Q\vert^{{1/q -1/\lambda}}}{\|m_{\psi}^{\ast} f\|_{L^{q}(Q)}}b_{Q}\nonumber
	\end{equation}
if denominator is not zero or  $\tilde{b}_Q=0$ otherwise. Hence, $\{\tilde{b}_Q\}_Q$ is a sequence of rough atoms, because $\text{supp}\,(\tilde{b}_Q)\subseteq Q$ and $\Vert \tilde{b}_Q\Vert_{\infty} \leq \vert Q\vert^{-{1}/{\lambda}}$. If we rewriting (\ref{decom-b_Q}) as follows 
	\begin{equation}\label{key1}
\psi\ast f= \sum_{Q}\tilde{s}_{Q}\tilde{b}_{Q},
\end{equation} 
where $\tilde{s}_Q= \vert Q\vert^{1/\lambda -1/q}\|m^{\ast}_{\psi}f\|_{L^{q}(Q)}$. Then,
\small{\begin{align}
\left\Vert \{\tilde{s}_Q\}_{Q}\right\Vert_{\lambda,q}
= \sup_{J} \left\{\left({\vert J\vert^{\frac{q}{\lambda}-1}} \sum_{Q\subseteq J}\left(\vert Q\vert^{{1}/{q}-{1}/{\lambda}}\,\vert \tilde{s}_{Q}\vert\right)^q \right)^{\frac{1}{q}}\right\}\nonumber
&= \sup_{J} \left\{ \left(\vert J\vert^{\frac{q}{\lambda}-1}\sum_{Q\subseteq J} \int_Q\vert  m_{\psi}^{\ast}f\vert^q dy\right)^{\frac{1}{q}} \right\} \nonumber\\
&\leq \sup_{J} \left\{   \left(\vert J\vert^{{q}/{\lambda}-1}  \int_J\vert m_{\psi}^{\ast}f(y)\vert^q dy\right)^{\frac{1}{q}} \right\} \nonumber\\
&\lesssim \Vert f\Vert_{h\mathcal{M}_q^{\lambda}} \nonumber.
\end{align}}
It follows from (\ref{key2}) and (\ref{key1}) that  
\begin{equation}\label{XX}
f=\sum_{Q\,:\, \text{dyadic}} s_Qa_Q+\sum_{Q\,:\, \text{dyadic}} \tilde{s}_{Q}\tilde{b}_{Q}\;  \text{ in }\mathcal{S}'(\R^n)
\end{equation}
 is an atomic decomposition for $f\in h\mathcal{M}_q^\lambda(\R^n)$ such that $\left\Vert \{s_Q\}_{Q}\right\Vert_{\lambda,q}+\left\Vert \{\tilde{s}_Q\}_{Q}\right\Vert_{\lambda,q}\lesssim\Vert f\Vert_{h\mathcal{M}_q^{\lambda}}$.
\end{proof}

\begin{remark}{According to \cite{Sawano, Sawano2}, we have  $h\mathcal{M}_{q}^{\lambda}(\R^n) = \mathcal{E}^{0}_{\lambda 2q}(\R^n) = F^{0, 1/q-1/\lambda}_{q,2}(\R^n)$. It follows from  \cite[Theorem  4.12]{Sawano3} that distributions $f\in h\mathcal{M}_{q}^{\lambda}(\R^n)$ have an atomic decomposition  $f=\sum_{Q} s_{Q}a_{Q}$, where the atoms $\{a_{Q}\}$ can be chosen in $L^{\infty}(\mathbb{R}^n)$. However, the atomic space provides a different way to estimate the coefficients of the atomic decomposition $f=\sum_{Q}s_{Q}a_{Q}$.}
\end{remark}

\section{Pointwise Fourier transform decay in  $\mathcal{HM}_{q}^{\lambda}$}\label{Point-parte1}
{We start presenting a special function $\varphi \in  C_{c}^{\infty}(\mathbb{R}^n) $ to be considered at the next proposition. }
\begin{lemma}\label{suitable-cutoff}There exists $\varphi \in C_{c}^{\infty}(\mathbb{R}^n)$ with $\text{supp}(\varphi)\subset B(0,1)$ and nonnegative Fourier transform such that $\widehat{\varphi}(\xi)\geq 1$ for $|\xi| \leq 1$, where the Fourier transform is defined as 
$
		\widehat{\varphi}(\xi):=\int_{\mathbb{R}^n}e^{-ix\cdot \xi}\varphi(x)dx.\nonumber
$ 
\end{lemma}
\begin{proof}
Let $\eta \in C_{c}^{\infty}(\mathbb{R}^n)$ be a nonnegative  real valued function with $\text{supp}(\eta) \subseteq B(0,1/2)$. Then $ \psi:=\eta\ast \eta$ is supported on $B(0,1)$ and has nonnegative Fourier transform. Since $\widehat{\psi}(0)=(\int \eta(x)dx)^2>0$, by continuity we can choose $c_1>1$ and $c_2>0$ sufficiently large such that  $\displaystyle \min_{ \vert\xi\vert\leq 1} \{c_2\widehat{\psi}(\xi/c_1)\}\geq 1$. Hence $\varphi(x):= c_1^{n}c_2\psi (c_1x)$ has  the desired properties. 
\end{proof}

\begin{proposition}\label{prop2.5}
Let $0<q \leq \lambda\leq 1$ and $a_Q$ an $(q,\lambda,\infty)$-atom,  
then  $\widehat{a_Q}(\xi)$ is continuous and satisfy 
		\begin{equation}\label{decay-Fourier}
			\vert  \widehat{a_Q}(\xi) \vert  \lesssim |\xi|^{n\left(\frac{1}{\lambda}-1 \right)} \|a_Q \|_{\mathcal{HM}_{q}^{\lambda}},  {\quad \quad \quad  \forall  \,\,\xi \in \R^n}
\end{equation}
with implicit constant independent of $a_Q$.
\end{proposition}
\begin{proof}
Let $f=a_Q$ be a function compactly supported on a cube $Q=Q(x_0,\ell_{Q})$ centered at $x_0$ with sidelength $\ell_{Q}>0$ and let $\varphi \in C_{c}^{\infty}(\R^n)$ be according to Lemma \ref{suitable-cutoff}. Let $t \geq \ell_{Q}$ and set $\gamma=\lambda/q$. As $\text{supp }(\varphi_{t} \ast f) \subset Q(x_{0},\ell_{Q}+t)$ then we estimate 
\begin{align}\nonumber
\| \varphi_t\ast f \|_{L^{1}} 
\leq \vert Q(x_0,2t)\vert^{1-\frac{1}{\gamma}}\Vert  \varphi_t\ast f \Vert_{\mathcal{M}^{\gamma}_{1}}
\lesssim  t^{n\left(1-\frac{1}{\gamma} \right)} t^{-n\left(\frac{1}{\lambda}-\frac{1}{\gamma} \right)} \|f \|_{\mathcal{HM}_{q}^{\lambda}} 
=t^{n\left(1-\frac{1}{\lambda}\right)}\|f \|_{\mathcal{HM}_{q}^{\lambda}},
\end{align}
where the second inequality follows from \eqref{2.8-heat}. Since $\widehat{\varphi}(\zeta)\geq 1$ on sphere $\mathbb{S}^{n-1}$, then taking $t=\vert \xi\vert^{-1}$ we have
	\begin{equation}\label{key-est-for -hM}
			\vert \widehat{f}(\xi)\vert \leq \vert\widehat{\varphi}(\xi/\vert\xi\vert)\, \widehat{f}(\xi)\vert = \vert (\varphi_{\vert\xi\vert^{-1}}\ast f)^{\wedge}(\xi)\vert \leq \Vert \varphi_{\vert\xi\vert^{-1}}\ast f\Vert_{L^{1}} \lesssim  \vert \xi\vert^{n\left(\frac{1}{\lambda}-1 \right)} \|f \|_{\mathcal{HM}_{q}^{\lambda}} ,
	\end{equation}
if provided that $0<|\xi| \leq \ell_{Q}^{-1}$. Now  for $t=\vert \xi\vert^{-1}<\ell_{Q}\,$, it  follows that 
$\varphi_{t} \ast f$ is supported at $2Q=Q(x_{0},2\ell_{Q})$ and mimicking (\ref{control2})  with $2Q$ in place of $Q_t$ 
one has $\Vert \varphi_{t} \ast f\Vert_{L^\infty}\lesssim \ell_{Q}^{-n/\lambda}\Vert f\Vert_{\mathcal{HM}_{q}^{\lambda}}$. Hence, we may estimate
\begin{align*}
|\hat{f}(\xi)| \leq \Vert \varphi_{\vert\xi\vert^{-1}}\ast f\Vert_{L^{1}}\lesssim \vert Q\vert \,\Vert \varphi_{\vert\xi\vert^{-1}}\ast f\Vert_{L^{\infty}}&\lesssim \ell_{Q}^{n\left(1-\frac{1}{\lambda}\right)} \Vert f\Vert_{\mathcal{HM}_{q}^{\lambda}}
\lesssim \vert \xi\vert^{n\left(\frac{1}{\lambda}-1 \right)} \|f \|_{\mathcal{HM}_{q}^{\lambda}},
\end{align*} 
if provided $0<\lambda\leq 1$ . The case $\xi=0$ follows by continuity of Fourier transform.
\end{proof}


\begin{theorem}\label{cor-decay}
If  $f\in\mathcal{HM}_{q}^{\lambda}(\R^n)$ for {$0<q \leq \lambda \leq 1$},
then $\widehat{f}(\xi)$ is a continuous function 
and satisfy the {following Fourier transform decay} 
\begin{equation}\label{decay-Fourier2}
|\widehat{f}(\xi) | \lesssim  |\xi|^{n\left(\frac{1}{\lambda}-1 \right)} \|f \|_{\mathcal{HM}_{q}^{\lambda}}, {\quad  \forall  \,\,\xi \in \R^n}
\end{equation}
with implicit positive constant independent of $f$. 
\end{theorem}
\begin{proof} {We only have to assume $0<q<\lambda \leq 1$, since the conclusion holds for $q=\lambda$ i.e. Hardy spaces}. The Let $f=\sum s_{Q}a_{Q}$ be an atomic decomposition such that $\|\left\{s_{Q}\right\}_{Q} \|_{\lambda,q}
\lesssim  \|f \|_{\mathcal{HM}_{q}^{\lambda}}$ from Theorem \ref{atomich}.  We claim that there exists $C>0$ {independent of the decomposition} such that 
\begin{align}\label{convergente}
|J|^{\frac{q}{\lambda} -1} \sum_{Q\,:\, \text{dyadic}} |s_{Q}|^q  \|M_{\varphi}a_{Q}\|^q_{L^{q}(J)} ,
\leq C \|\left\{s_{Q}\right\}_{Q} \|_{\lambda,q}^q
\end{align}
uniformly on $J$ cube. 

Let us assume first the validity of \eqref{convergente} and consider $\left\{ Q_{j} \right\}_{j}$ an enumeration of the family $\left\{ Q\right\}$ associated to the decomposition $f=\sum s_{Q}a_{Q}$. From \eqref{convergente} for each $\epsilon>0$ there exists $\ell \in \N$ such that 
\begin{align}\label{convergente2}
\sum_{j=\ell}^{m} |s_{Q_{j}}|^q |J|^{\frac{q}{\lambda} -1} \|M_{\varphi}a_{Q_{j}}\|^q_{L^{q}(J)} 
\leq  C \epsilon^q 
\end{align}
with constant $C>0$ uniformly on cube $J$ and independent of the sum, whenever $m\geq\ell$. Note that each $(q,\lambda,\infty)$-atom $a_{Q}$, by Proposition \ref{prop2.5}, satisfies $\widehat{a_{Q}}$ is continuous and $\vert \widehat{a_{Q}}(\xi) \vert  \leq C {\vert \xi\vert ^{n\left({1}/{\lambda}-1 \right)} \Vert a_Q\Vert_{\mathcal{HM}_q^{\lambda}}}$ with constant independent of $a_Q$. Thereby,  combining the previous control 
with \eqref{convergente2},  
 for each $\epsilon>0$ small enough there exists $\ell \in \N$ such that 
\begin{align}
\left\vert \sum_{j=\ell}^m s_{Q_{j}} \widehat{a_{Q_{j}}}(\xi)\right\vert^q=\left\vert\Big(  \sum_{j=\ell}^m s_{Q_j}a_{Q_j} \Big)^{\widehat{\,\,\,}}(\xi)\,\right\vert^q & \lesssim \,\vert \xi\vert^{n q\left( \frac{1}{\lambda}-1 \right)}\Big\|\sum_{j=\ell}^m s_{Q_j}a_{Q_j} \Big\|^{q}_{\mathcal{HM}_{q}^{\lambda}}\nonumber\\
& \lesssim\,\vert \xi\vert^{n q \left( \frac{1}{\lambda}-1 \right)} \sup_{J} \left\{ |J|^{\frac{q}{\lambda} -1} \Big\|M_{\varphi}\Big( \sum_{j=\ell}^m s_{Q_j}a_{Q_j} \Big)\Big\|^q_{L^{q}(J)} \right\} \nonumber\\
& \leq \;\vert \xi\vert^{nq \left( \frac{1}{\lambda}-1 \right)} \sup_{J} \left\{ |J|^{\frac{q}{\lambda} -1} \sum_{j=\ell}^m |s_{Q_j}|^q\|M_{\varphi}a_{Q_j}\|^q_{L^{q}(J)} \right\} \label{p3}\\
& \leq C\,\varepsilon^q  \vert \xi\vert ^{nq \left( \frac{1}{\lambda}-1 \right)},\nonumber
\end{align}
whenever $m\geq\ell$, where was invoked $\vert \sum_{j}^m s_j\vert ^{q}\leq \sum_{j}^m\vert s_j\vert^{q}$ for  $0<q\leq 1$ in the third inequality. 
Thus $\sum_Q s_{Q}\widehat{a_{Q}}$ is a Cauchy sequence on $C(K)$ - the set of continuous function on compact set $K \subset \subset \R^n$ - that is complete and then the series converges uniformly to a continuous function on $C(K)$.  Moreover, following the same steeps in the proof of \eqref{p3} we have
\begin{equation}\nonumber
\big\vert \sum_{j=1}^m s_{Q_{j}} \widehat{a_{Q_{j}}}(\xi)\big\vert 
 \leq \;\vert \xi\vert^{n \left( \frac{1}{\lambda}-1 \right)} \sup_{J} \left\{ |J|^{\frac{q}{\lambda} -1} \sum_{j=\ell}^m |s_{Q_j}|^q\|M_{\varphi}a_{Q_j}\|^q_{L^{q}(J)} \right\} 
\leq C\vert \xi\vert ^{n \left(\frac{1}{\lambda}-1 \right)} \|\left\{s_{Q}\right\}_Q \|_{\lambda,q},
\end{equation}
where the last inequality we use \eqref{convergente}, which yields
\begin{align*}
\Big|\sum_{Q\,:\, \text{dyadic}}s_{Q} \widehat{a_{Q}}(\xi)\Big| \leq C  \vert \xi\vert^{n \left( \frac{1}{\lambda}-1 \right)} \|\left\{s_{Q}\right\}_Q \|_{\lambda,q}.
\end{align*}
Since $ \widehat{f}=\sum_{Q}  s_{Q}\widehat{a_{Q}}$ converges as a distribution in  $\mathcal{S}'(\R^n)$, we conclude that $\hat{f}$ is continuous and 
\begin{equation}\nonumber
|\hat{f}(\xi)| \leq C  |\xi|^{n \left( \frac{1}{\lambda}-1 \right)} \|\left\{s_{Q}\right\}_{Q} \|_{\lambda,q}
\lesssim  |\xi|^{n \left( \frac{1}{\lambda}-1 \right)} \|f \|_{\mathcal{HM}_{q}^{\lambda}}.
\end{equation}
 
In order to conclude the proof we need to justify \eqref{convergente}.  Fixed a cube $J$ we split the sum  $\sum_{Q}=\sum_{Q\subseteq J}+\sum_{Q\supset J}$.  Using the same arguments in the proof of Proposition \ref{prop3.4}, we obtain the estimate $\Vert M_{\varphi}(a_Q)\Vert_{L^q(J)}^q\lesssim \Vert a_Q\Vert_{L^{\infty}}^q \vert Q\vert$ for $|Q| \leq |J|$. Indeed, we start writing 
\begin{align}
\Vert M_{\varphi}(a_Q)\Vert_{L^q(J)}^q =\int_{J \cap Q^{\ast}}\vert M_{\varphi}(a_Q)\vert^qdx + \int_{J\backslash Q^{\ast}}\vert M_{\varphi}(a_Q)\vert^qdx :=I_1+I_2\nonumber.
\end{align}
Mimicking the proof of Proposition \ref{prop3.4}, choosing $r>\lambda$, we estimate 
\begin{align*}
I_1 +I_2 &\lesssim \Vert a_Q\Vert^{q}_{L^{\infty}} \vert Q\vert^{q/r} \vert J \cap Q^{\ast}\vert^{1-{q}/{r}} + \| a_Q\|^{q}_{L^\infty} \ell_Q^{(n+L+1)q} \int_{2\ell_{Q}}^{\infty}r^{-q(n+L+1)+n-1}dr
\lesssim \Vert a_Q\Vert^{q}_{L^{\infty}}|Q|.
\end{align*}
Hence, we have 
\begin{align}
\vert J\vert ^{q/\lambda-1}\sum_{Q \subseteq J} \vert s_{Q}\vert^q \, \Vert M_{\varphi}(a_Q)\Vert_{L^q(J)}^q \lesssim 
\vert J\vert ^{q/\lambda-1}\sum_{Q \subseteq J} \vert s_{Q}\vert^q \, |Q|^{1-q/\lambda}\lesssim
\| \left\{  s_{Q} \right\}_Q \|_{\lambda,q}^q. \label{p1}
\end{align}
Now by  \eqref{key-est-1} and $\| a_Q\|_{L^\infty}^q\leq |Q|^{-q/\lambda}$ we can write 
\begin{align*}
|J|^{q/\lambda-1}\sum_{Q \supset J}|s_{Q}|^{q}\Vert M_{\varphi}(a_Q)\Vert_{L^q(J)}^q & \lesssim \sum_{Q \supset J}|s_{Q}|^{q} \left( |J|^{q/\lambda-1}\int_{J\cap Q^{\ast}}|M_{\varphi}(a_Q)(x)|^{q}dx\right) \\
&\lesssim  \sum_{Q \supset J}|s_{Q}|^{q} |Q|^{q/r-q/\lambda}  |J|^{q/\lambda-1}  |J \cap Q^{\ast}|^{1-{q}/{r}}\\
&\lesssim \sum_{Q \supset J} |s_{Q}|^{q} (|J|/|Q|)^{\gamma},
\end{align*}
where  $\gamma:=q\left({1}/{\lambda}-{1}/{r}\right)>0$. Fixed a dyadic cube $J$, we point out there exists a subset $N\subseteq \N$ such that  each cube $J \subset Q$ is uniquely determined by a dyadic cube $Q_{k,J} \in \big\{ Q\; \text{dyadic}: J\subset Q \text{ and }\ell_{Q}=2^{k}\ell_{J} \big\}$. Hence, we can write
$\displaystyle{ \sum_{Q\supset J} |s_{Q}|^{q} (|J|/|Q|)^{\gamma}=\sum_{k \in N} \vert s_{Q_{k,J}}\vert^q \, 2^{-kn\gamma}}$
which yields 
\begin{align}
|J|^{q/\lambda-1}\sum_{Q\supset J}|s_{Q}|^{q}\Vert M_{\varphi}(a_Q)\Vert_{L^q(J)}^q
&\lesssim \sum_{k \in N} \vert s_{Q_{k,J}}\vert^q \, 2^{-kn\gamma}\nonumber\\
&=\sum_{k \in N}\left( |s_{Q_{k,J}}|^{q}  |Q_{k,J}|^{1-q/\lambda} \right) |Q_{k,J}|^{q/\lambda-1}2^{-kn\gamma} \nonumber\\
&\leq \sum_{k \in N} \left( \sum_{Q \subseteq Q_{k,J}} |s_{Q}|^{q}  |Q|^{1-q/\lambda} \right) |Q_{k,J}|^{q/\lambda-1}2^{-kn\gamma}\nonumber \\
& \lesssim \| \left\{s_{Q} \right\}_{Q}\|_{\lambda,q}^q\sum_{k \in N}2^{-kn\gamma} \nonumber\\
& \leq C \| \left\{s_{Q} \right\}_{Q}\|_{\lambda,q}^q.\label{p2}
\end{align} 
Therefore, by \eqref{p1} and \eqref{p2}
we have
\begin{align*}
\vert J\vert ^{q/\lambda-1}\sum_{Q\,:\, \text{dyadic}} \vert s_{Q}\vert^q \Vert M_{\varphi}(a_Q)\Vert_{L^q(J)}^q \lesssim \| \left\{s_{Q} \right\}_{Q}\|_{\lambda,q}^q 
 \end{align*}
with implicit constant uniformly on $J$, as we wished to show. 
\end{proof}

\begin{remark}\label{rem1}
The Fourier transform decay \eqref{decay-Fourier2} in general is not valid  for functions on  Morrey spaces  $\mathcal{M}_q^{\lambda}(\mathbb{R}^n)$ when $1\leq q<\lambda<\infty$, in fact 
{there exists $f\in \mathcal{M}_q^{\lambda}(\mathbb{R}^n)$ such that }
	\begin{equation}\label{marceloeq}
		\vert \xi\vert^{n(1-1/\lambda)}\vert \widehat{f}(\xi)\vert \rightarrow\infty \quad \text{as}\quad  \vert\xi\vert\rightarrow\infty.
	\end{equation}
According to \cite[Remark 2.1]{deAlmeida2}, there is a special function $\varphi \in C_{c}^{\infty}(\R^n)$ such that $f(x):=\sum_{j=1}^{\infty}e^{2\pi ix\cdot x_j}\varphi(x-x_{j})$ belongs to $\mathcal{M}_q^{\lambda}(\R^n)\,$ for all $\,1\leq q<\lambda<\infty$. {However, its Fourier transform $\widehat{f}(\xi)$ satisfies \eqref{marceloeq}}. 
\end{remark}

Next we will show that constraint $ 0<q\leq \lambda\leq 1$ in Theorem \ref{cor-decay} is necessary, since the continuity of Fourier transform $\,\widehat{f}(\xi)$  breaks down at $\xi=0$ as  $\lambda>1$, for some distribution.

\begin{proposition}\label{exam.53} {There exists $f\in\mathcal{HM}_q^{\lambda}(\mathbb{R}^n)$ with $0<q\leq 1$ and $\lambda>1$ such that  $\,\widehat{f}(\xi)$  is not continuous at $\xi=0$   } 
\end{proposition}

\begin{proof} It is sufficient construct an $\mathcal{HM}_q^{\lambda}(\mathbb{R}^n)-$atom $a_Q$ such that $\widehat{a}_Q(\xi)\rightarrow\infty$ and $\xi\rightarrow0$.  According to \cite[Lemma 3.1]{HKP}, {for each $k \in \N$ there is a bounded function  $\alpha_k$ supported in the interval $[-2^{k-1}, 2^{k-1}]$ such that $\int \alpha_k(s)s^{\ell}ds=0$ for every $0 \leq \ell\leq k-1$} 
and $\|\alpha_{k}\|_{L^\infty}\leq 1$. 
Denote by $(x',x_{n})$ with $x'=(x_{1},...,x_{n-1})$ a generic point in $\R^{n} \cong \R^{n-1} \times \R$. Let us  define $a_Q(x)=2^{-k/\lambda}\psi(x')\alpha_k(x_n)$ where $\psi \in C^{\infty}_{c}(\mathbb{R}^{n-1})$ supported in the unit cube $Q'(0,1)\subset \mathbb{R}^{n-1}$ such that $\widehat{D^{\alpha}\psi}(0)=1$. Clearly $\text{supp}(a_Q)\subseteq Q:= Q'(0,1)\times [-2^{k-1}, 2^{k-1}]$, $\Vert a_Q\Vert_{\infty}\leq \vert Q\vert^{-1/\lambda}\Vert\psi\Vert_{L^{\infty}(\R^{n-1})} $ and Tonelli's theorem yields 
\begin{equation}\nonumber
\int_{\mathbb{R}^n} a_Q(x)x^{\alpha}dx = 0,\;  \text{ for all }\; \alpha \in \N^{n}_0 \; \text{such that} \;\vert\alpha\vert\leq k-1. 
\end{equation}
Taking $q\in (n/(n+k), n/(n+k-1)]$ one has $\vert\alpha\vert\leq k-1=\left\lfloor n\left({1}/{q}-1\right)\right\rfloor$ and then $a_Q$ is a {multiple of}  $\mathcal{HM}_q^{\lambda}-$atom. It follows from Proposition \ref{prop3.4} and {scaling property \eqref{scaling}} that $\|a_{\varepsilon}\|_{\mathcal{HM}_{q}^{\lambda}} = \|a_Q\|_{\mathcal{HM}_{q}^{\lambda}}\lesssim \Vert\psi\Vert_{L^{\infty}(\R^{n-1})}$, where $a_{\varepsilon}(x)=\varepsilon^{-n/\lambda} a_{Q}(x/\varepsilon)$ for every $\varepsilon>0$. According to \cite[Lemma 3.2]{HKP} we have 
\begin{equation}
\widehat{a}_{\varepsilon}(\xi',\xi_n) ={2^{-\frac{k}{\lambda}}}\varepsilon^{n(1-\frac{1}{\lambda})}\widehat{\psi}(\varepsilon\xi')\left(\frac{1-\cos(2\pi \varepsilon \xi_n)}{i\pi \varepsilon \xi_n}\right)(-i2)^k\prod_{j=1}^k\sin(2\pi j\varepsilon\xi_n)\nonumber.
\end{equation}
Then $\vert \widehat{a}_{\varepsilon}(0',1/{4k\varepsilon})\vert \simeq \varepsilon^{n(1-{1}/{\lambda})}$ which leads to $\vert \widehat{a}_{\varepsilon}(0',1/{4k\varepsilon})\vert \rightarrow \infty\;$ as $\;\varepsilon\rightarrow\infty$ if $\lambda>1$, i.e., the continuity of  $\;\widehat{a_Q}(\xi)\,$  breaks down at $\,\xi=0$. 
\end{proof}

\subsection{Sharpness on boundedness of Fourier multipliers on $\mathcal{HM}_{q}^{\lambda}$}

In this section we use the arguments in the proof of Proposition \ref{exam.53} to obtain {an optimality} of the parameters on the boundedness of Fourier multipliers on Hardy-Morrey spaces. 

Let  $\Sigma^{m}(\mathbb{R}^n)$ for $m \in \R$ the set of symbols defined by functions $\sigma\in C^{\infty}(\mathbb{R}^n\backslash\{0\})$ such that  
$$\vert \partial^{\alpha}\sigma(\xi)\vert \leq  C_{\alpha}\vert \xi\vert^{m-\vert\alpha\vert},\quad  \alpha\in\mathbb{N}_0^{n},$$
and $Op\Sigma^{m}(\mathbb{R}^n)$ the set of operators $\sigma(D)$ associated to symbols $\sigma \in \Sigma^{m}(\mathbb{R}^n)$ given by 
$$\sigma{(D)}u(x)=\int_{\mathbb{R}^{n}} e^{2\pi x \cdot \xi}\sigma(\xi)\hat{u}(\xi)d\xi, \quad \quad u \in \mathcal{S}(\mathbb{R}^{n}).$$

\begin{theorem}\cite[Theorem 2.1]{WJ}\label{jw1}
Let $\beta\geq 0$,  $0<q_{1}\leq 1$, $q_{1} \leq \lambda_{1}$ and ${\frac{1}{\lambda_{2}}:=\frac{1}{\lambda_{1}}-\frac{\beta}{n}}$. If $q_{2}\leq 1$ then 
$\sigma(D) \in Op\Sigma^{-\beta}(\mathbb{R}^n)$ is bounded from  $\mathcal{HM}_{q_1}^{\lambda_{1}}(\mathbb{R}^n)\;$ to $\;\mathcal{HM}_{q_2}^{\lambda_{2}}(\mathbb{R}^n)$ where ${q_{2}:=\lambda_{2}\frac{q_{1}}{\lambda_{1}}}$.
\end{theorem}

The next proposition is addressed  to optimality of previous result  in the sense that  if $\beta \neq n\left(\frac{1}{\lambda_{1}}-\frac{1}{\lambda_{2}}\right)$  and ${\frac{q_{2}}{\lambda_{2}}=\frac{q_{1}}{\lambda_{1}}}$, then 
there exists  $\sigma(D) \in Op\Sigma^{-\beta}(\mathbb{R}^n)$ which is not continuous from $\mathcal{HM}_{q_1}^{\lambda_{1}}(\mathbb{R}^n)\;$ to  $\;\mathcal{HM}_{q_2}^{\lambda_{2}}(\mathbb{R}^n)$.  

\begin{proposition}\label{optimal_Sigma2}  Let $0<q_{1}, q_{2} \leq 1$,  $q_{1} \leq \lambda_{1}\leq 1$ satisfying ${ \frac{q_{2}}{\lambda_{2}}=\frac{q_{1}}{\lambda_{1}}}$  and  
$\beta \neq n({1}/{\lambda_{1}}-{1}/{\lambda_{2}})$. Then there exists $\sigma(D)\in Op\Sigma^{-\beta}(\mathbb{R}^n)$ which is not continuous from $\mathcal{HM}_{q_1}^{\lambda_{1}}(\mathbb{R}^n)\;$ to $\;\mathcal{HM}_{q_2}^{\lambda_{2}}(\mathbb{R}^n)$.
\end{proposition}
\begin{proof}According the previous construction of $a_{Q}$ at Proposition  \ref{exam.53}, we consider $a_\varepsilon(x):=\varepsilon^{-n/\lambda_1}a_Q(x/\varepsilon)$ that defines 
a multiple of  an $\mathcal{HM}_{q_1}^{\lambda_1}-$atom supported on 
 $Q_{\varepsilon}:=Q'(0,\varepsilon)\times [-2^{k-1}\varepsilon, 2^{k-1}\varepsilon]$ for $k \in \N$ that satisfies $\Vert a_{\varepsilon}\Vert_{\infty}\leq \vert Q_{\varepsilon}\vert^{-1/\lambda_1}\Vert\psi\Vert_{L^{\infty}(\R^{n-1})} $ and 
\begin{equation}\nonumber
	\int_{\mathbb{R}^n} a_{\varepsilon}(x)x^{\alpha}dx = 0,\;  \text{ for all }\; \alpha \in \N^{n}_0 \; \text{ with } \;\vert\alpha\vert\leq k-1:=\left\lfloor n\left({1}/{q_1}-1\right)\right\rfloor.
\end{equation}

Let $\sigma_{\beta}(D)$ with $\sigma_{\beta}(\xi)=\vert \xi\vert^{-\beta}$ and $\beta \neq n\left(\frac{1}{\lambda_{1}}-\frac{1}{\lambda_{2}}\right)$. Now consider  $g_{\epsilon}(x) := \sigma_{\beta}(D)a_{\epsilon}(x)$ and then $
\widehat{g}_{\varepsilon}(\xi',\xi_n) =\vert\xi\vert^{-\beta}\widehat{a}_{\varepsilon}(\xi',\xi_n)$ which yields at $\vert \widehat{g}_{\varepsilon}(0',1/{4k\varepsilon})\vert \simeq\varepsilon^{n(1-{1}/{\lambda_1})+\beta}$. Hence, from Theorem  \ref{cor-decay} one has 
\begin{align}\nonumber
\Vert \sigma_{\beta}(D)a_{\varepsilon}\Vert_{\mathcal{HM}_{q_2}^{\lambda_2}} \gtrsim
\frac{\vert \widehat{g}_{\varepsilon}(0',1/{4k\varepsilon})\vert }{\left\vert (0,1/4k\varepsilon)\right\vert^{n(1/\lambda_2-1)}}\simeq  \varepsilon^{n\left(1-\frac{1}{\lambda_{1}}\right)+\beta+n\left(\frac{1}{\lambda_{2}}-1\right)}=\varepsilon^{\beta-n\left(\frac{1}{\lambda_{1}}-\frac{1}{\lambda_{2}}\right)} 
\end{align}
that blows-up, if $\beta \neq n({1}/{\lambda_{1}}-{1}/{\lambda_{2}})$. 
\end{proof}

\subsection{Hardy inequality}
{In  this section we present a Hardy-type inequality, well known for $H^p(\R^n)$ (see \cite[Corollary 7.23]{Garcia}), }extended to Hardy-Morrey spaces.

\begin{proposition} \label{coro3.7} If $f\in \mathcal{HM}_q^{\lambda}(\R^n)$ for {$0<q< \lambda\leq 1$}, then 	
	\begin{equation}\nonumber
		\left\Vert  \,{\vert \cdot \vert^{n(1-{2}/{\lambda})}} \widehat{f} \right\Vert_{ \mathcal{M}^{\lambda}_{q}}
	\lesssim \Vert f\Vert_{ \mathcal{HM}^{\lambda}_{q}},
	\end{equation}
	with implicit positive constant independent of $f$. 
\end{proposition}

\begin{proof}{The Fourier transform decay} \eqref{decay-Fourier2}  can be written as $\vert \widehat{f}(\xi)\vert\, \vert \xi \vert^{n\left(1-{2}/{\lambda}\right)} \lesssim \Vert f\Vert_{ \mathcal{HM}^{\lambda}_{q}} \vert \xi \vert^{-{n}/{\lambda}}$. Since $|\xi|^{-n/\lambda}\in \mathcal{M}_q^\lambda(\R^n)$ as $0<q<\lambda<\infty$, then 
\begin{equation}
	\left\Vert  \,{\vert \cdot \vert^{n(1-{2}/{\lambda})}}{\widehat{f}}\,\right\Vert_{ \mathcal{M}^{\lambda}_{q}} \lesssim  \left\Vert \vert \cdot\vert^{-n/\lambda}\right\Vert_{\mathcal{M}_q^{\lambda}}\Vert f\Vert_{ \mathcal{HM}^{\lambda}_{q}}=c_n\Vert f\Vert_{ \mathcal{HM}^{\lambda}_{q}}\nonumber,
\end{equation}
{as we desired.}
\end{proof}



\section{Pointwise Fourier transform decay in  $h\mathcal{M}_{q}^{\lambda}$}\label{sec4}

As consequence of Theorem \ref{cor-decay} and Theorem \ref{atomich22} we strengthen the {pointwise Fourier transform decay in $h^p(\R^n)$}  (see \cite[Proposition 5.1]{HK}) for distributions on $h\mathcal{M}_{q}^{\lambda}(\R^n)$. 
%

\begin{theorem}\label{cor-decay2}
If  $f\in h\mathcal{M}_{q}^{\lambda}(\R^n)$ for  $0<q \leq  \lambda\leq 1$ then $\widehat{f}(\xi)$ is continuous  and satisfies 
\begin{equation}\label{decay-Fourier4}
|\widehat{f}(\xi) | \lesssim  \langle\xi\rangle^{n\left(\frac{1}{\lambda}-1 \right)} \|f \|_{h\mathcal{M}_{q}^{\lambda}}, 
\end{equation}
with implicit constant independent of $f$, where $\langle\xi\rangle:=(1+\vert\xi\vert^2)^{1/2}$.
\end{theorem}
\begin{proof} By identity \eqref{XX} in the proof of Theorem \ref{atomich22} we may write
\begin{equation}\label{conv-hM}
f=\sum_{Q}s_{1,Q}a_{1,Q}+\sum_{Q}s_{2,Q}a_{2,Q} 
\end{equation}
with 
$\Vert \{s_{1,Q}\}_{Q}\Vert_{{\lambda,q}}+ \Vert \{s_{2,Q}\}_{Q}\Vert_{{\lambda,q}} \simeq \Vert \{s_Q\}_{Q}\Vert_{{\lambda,q}} \lesssim \|f\|_{h\mathcal{M}_{q}^{\lambda}}$,
where $\{a_{1,Q}\}_{Q}$ are {$\mathcal{HM}^{\lambda}_{q}-$}atoms and $\{a_{2,Q}\}_{Q}$ are  {rough $(\infty,\lambda)-$atoms with $\ell_{Q}\geq 1$}.  Moreover $\widehat{f}=\sum_Q s_{1,Q}\widehat{a_{1Q}}+\sum_Q s_{2,Q}\widehat{a_{2,Q}}\,$ is well defined in distribution sense.
 Since $f_{1}:=\sum_{Q}s_{1,Q}a_{1,Q}$ with
$\Vert f_1\Vert_{\mathcal{HM}_q^{\lambda}} \lesssim \| \left\{s_{1,Q}\right\}_{Q} \|_{\lambda,q}$
then it follows from Theorem \ref{cor-decay} that
\begin{equation}\label{t1}
\vert \widehat{f_{1}}(\xi)\vert \lesssim \vert \xi\vert ^{n\left(\frac{1}{\lambda}-1 \right)} \Vert f_1\Vert_{\mathcal{HM}_q^{\lambda}}\lesssim \vert \xi\vert ^{n\left(\frac{1}{\lambda}-1 \right)} \| \left\{s_{1,Q}\right\} \|_{\lambda,q}\lesssim \langle \xi\rangle ^{n\left(\frac{1}{\lambda}-1 \right)}  \|f\|_{h\mathcal{M}_{q}^{\lambda}},  
\end{equation}
where $|\xi|^{n\left({1}/{\lambda}-1 \right)} \leq\langle \xi\rangle ^{n\left({1}/{\lambda}-1 \right)}$ for all $\xi \in \R^n$ in view of  $\lambda\leq 1$. Now we moving on to the sum $\sum_Q s_{2,Q}\widehat{a_{2,Q}}(\xi)$, where each $a_{2,Q}$ is supported in a
cube $Q$ with center $x_{Q}$ and sidelength $\ell_{Q} \geq 1$ such that $\|a_{2,Q}\|_{L^\infty}\leq |Q|^{-1/\lambda}$. 
Consider $\varphi\in C^{\infty}_c(B(0,1))$ with nonnegative Fourier transform such that $\widehat{\varphi}(\xi)\geq 1$ for  $\xi \in  B(0,1)$ according to Lemma \ref{suitable-cutoff}. Since $\text{supp}\,(a_{2,Q})\subset Q$  and $\ell_{Q}\geq 1$ then $\text{supp} \,(\varphi_{t}\ast a_{2,Q})$ is contained (uniformly) in a cube $ \tilde{Q}$ with same center $x_{Q}$ and sidelength $c_{n}\ell_{Q}$, where $c_{n}$ is an appropriated positive constant. Follows from \eqref{control2} adapted to the cube $\widetilde{Q}$ that 
$\|\varphi_{t}\ast a_{2,Q}\|_{L^\infty}\lesssim \ell_{Q}^{-n/\lambda}\|a_{2,Q}\|_{h\mathcal{M}^{\lambda}_{q}}$. Hence, 
\begin{equation}\label{a2parte1}
\|\varphi_{t} \ast a_{2,Q}\|_{L^{1}} \leq |\widetilde{Q}| \, \|\varphi_{t}\ast a_{2,Q}\|_{L^\infty} \lesssim \ell_{Q}^{n(1-1/\lambda)} \|a_{2,Q}\|_{h\mathcal{M}^{\lambda}_{q}}, 
\end{equation}
uniformly for $0<t\leq 1$. {Now taking $t=|\xi|^{-1}$ which implies $|\xi|\geq \ell_Q^{-1}$ for $\ell_Q\geq 1$ and $0<t\leq 1$, we  can proceed as  \eqref{key-est-for -hM} and invoking \eqref{a2parte1} we have} 
\begin{align*}
\vert \widehat{a_{2,Q}}(\xi)\vert\leq  \big\vert\widehat{\varphi}(\xi/|\xi|)\widehat{a_{2,Q}}(\xi)\big\vert 
= \big \vert ({\varphi_{{|\xi|^{-1}}} \ast a_{2,Q}})^{\wedge}(\xi) \vert
& \leq  \|{\varphi_{{|\xi|^{-1}}} \ast a_{2,Q}}\|_{L^{1}}  \\
&\lesssim \ell_{Q}^{n(1-1/\lambda)} \|a_{2,Q}\|_{h\mathcal{M}^{\lambda}_{q}} \\  
&\lesssim \Vert a_{2,Q}\Vert_{h\mathcal{M}_{q}^{\lambda}} \\
& \lesssim \langle \xi\rangle ^{n\left({1}/{\lambda}-1 \right)}   \Vert a_{2,Q}\Vert_{h\mathcal{M}_{q}^{\lambda}},
\end{align*}
since $\ell_{Q} \geq 1$ and $0<\lambda \leq 1$.  In order to control $\,\widehat{a_{2,Q}}(\xi)$ in the set $Q^{\sharp}:= \{ \xi \in \R^n: |\xi|<\ell_{Q}^{-1} \} \subseteq B(0,1)$ we take \eqref{a2parte1} to $t=1$ and then  
\begin{align*}
\vert \widehat{a_{2,Q}}(\xi)\vert 
\lesssim \big\vert\widehat{\varphi}(\xi)\widehat{a_{2,Q}}(\xi)\big\vert 
= \big \vert \widehat{\varphi \ast a_{2,Q}}(\xi)\big\vert  
\leq \|\varphi \ast a_{2,Q}\|_{L^{1}}
&\lesssim  \ell_{Q}^{n(1- 1/\lambda)} \Vert a_{2,Q}\Vert_{h\mathcal{M}^{\lambda}_{q}} \\
&\lesssim  \langle \xi\rangle ^{n\left({1}/{\lambda}-1 \right)} \Vert a_{2,Q}\Vert_{h\mathcal{M}^{\lambda}_{q}}.
\end{align*}
{Therefore, mimicking the proof of  {Theorem \ref{cor-decay}}, we may write $f_{2}:=\sum_{Q}s_{1,Q}a_{1,Q}$ where
$\Vert f_2\Vert_{h\mathcal{M}_q^{\lambda}} \lesssim \| \left\{s_{2,Q}\right\} \|_{\lambda,q}$   such that
 \begin{equation}\label{t2}
\vert \widehat{f_{2}}(\xi)\vert \lesssim \langle \xi\rangle ^{n\left(\frac{1}{\lambda}-1 \right)} \| \left\{s_{2,Q}\right\} \|_{\lambda,q} \lesssim  \langle \xi\rangle ^{n\left(\frac{1}{\lambda}-1 \right)}   \|f\|_{h\mathcal{M}_{q}^{\lambda}}.
\end{equation}
The estimate \eqref{decay-Fourier4} follows from \eqref{t1} and \eqref{t2}.}
 \end{proof}

\begin{remark} If $a_{Q}$ is {an} $h\mathcal{M}_q^{\lambda}-$atom associated to the cube $Q(x_{Q},r_{Q})$ for $0<q < \lambda\leq 1$, we may conclude {from the proof of Theorem \ref{cor-decay2}} the necessity of cancellation type condition 
$$\left\vert \int_{\R^{n}}a_{Q}(x)(x-x_{Q})^{\alpha}dx \right\vert \lesssim  1, \quad |\alpha|\  \leq   \left\lfloor n\left({1}/{\lambda}-1\right)\right\rfloor. $$
Controls of this type on localizable Hardy spaces have been investigated by Dafni et all \cite{Dafni, Dafni2, Dafni4, Dafni3}, in which the vanish moment conditions of Goldberg`s atoms associated to balls were relaxed to an approximated condition. 
\end{remark}

\subsection{Sharpness on boundedness in  the class $OpS^{m}_{1,0}(\R^n)$ on $h\mathcal{M}_q^{\lambda}$} 

For $m \in \mathbb{R}$, recall that a symbol $a=a(x,\xi) \in S^{m}_{1,0}(\mathbb{R}^n)$ 
of order $m$ and type $(1,0)$ is a smooth function defined on $\mathbb{R}^n\times \mathbb{R}^n$ satisfying the following estimate
\begin{equation}
	\vert\partial^{\alpha}_{x}\partial^{\beta}_{\xi}a(x,\xi)\vert\leq C_{\alpha, \beta}\left\langle \xi\right\rangle^{m-\vert\beta\vert},\;\; \alpha,\beta \in \Z_{+}^{n}. \nonumber
\end{equation}
To each symbol $a(x,\xi) \in S^{m}_{1,0}(\R^{n})$ we associate the pseudodifferential operator $a(x,D) \in OpS^{m}_{1,0}(\R^{n})$ given  by 
\begin{equation}\nonumber
a(x,D)u(x)=\int e^{2\pi i x \cdot \xi} a(x,\xi) \hat{u}(\xi)d\xi,\;\;\;\;u \in \mathcal{S}'(\R^{n}).
\end{equation}

\begin{theorem}[\cite{Dachun3}]\label{thm-Op}
	Let  $0<q\leq 1$ and $q< \lambda<\infty$. Then the operator $b(x,D) \in OpS^{0}_{1,0}(\mathbb{R}^{n})$ maps continuously $h\mathcal{M}^{\lambda}_{q}(\mathbb{R}^{n})$ into itself.
\end{theorem}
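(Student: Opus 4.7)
The plan is to combine the atomic decomposition of Theorem~\ref{atomich22} with the pointwise kernel estimates of Theorem~\ref{propah}. Given $f \in h\mathcal{M}_q^\lambda(\R^n)$, write $f = \sum_Q s_Q a_Q$ with $\|\{s_Q\}\|_{\lambda,q} \lesssim \|f\|_{h\mathcal{M}_q^\lambda}$, where each $a_Q$ is either a bounded $h\mathcal{M}_q^\lambda$-atom with $\ell_Q<1$ (hence satisfying the moment conditions up to order $N_q = \lfloor n(1/q-1) \rfloor$) or a rough $(\infty,\lambda)$-atom with $\ell_Q\geq 1$. Since $b(x,D)$ is continuous on $\mathcal{S}'(\R^n)$, the series $b(x,D)f = \sum_Q s_Q\, b(x,D)a_Q$ converges in $\mathcal{S}'(\R^n)$; by the converse direction of Theorem~\ref{atomich22} it suffices to produce, for each source atom $a_Q$, an atomic decomposition $b(x,D)a_Q = \sum_{Q'} t_{Q,Q'} a_{Q,Q'}$ into bounded $h\mathcal{M}_q^\lambda$-atoms whose coefficient sequence satisfies $\|\{t_{Q,Q'}\}_{Q'}\|_{\lambda,q} \lesssim 1$ uniformly in $Q$, and then to check that the combined double sum has coefficient norm controlled by $\|f\|_{h\mathcal{M}_q^\lambda}$.

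For a single atom $a_Q = a_{Q(x_Q,\ell_Q)}$ I would use the dyadic annular decomposition
\begin{equation*}
b(x,D)a_Q = \bigl(b(x,D)a_Q\bigr)\mathds{1}_{2Q} + \sum_{k\geq 1} \bigl(b(x,D)a_Q\bigr)\mathds{1}_{2^{k+1}Q\setminus 2^k Q}.
\end{equation*}
The local piece is handled by the $L^r$ boundedness of $OpS^{0}_{1,\delta}$ (Proposition~\ref{prop35} for $\delta=0$; the extension to $\delta<1$ is classical): for any $r > \max\{1,\lambda\}$ one has $\|b(x,D)a_Q\|_{L^r(2Q)} \lesssim \|a_Q\|_{L^r} \lesssim |Q|^{1/r-1/\lambda}$, which delivers up to a universal constant a multiple of an atom associated with $2Q$. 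For the annular pieces, I would use the kernel representation $b(x,D)a_Q(x) = \int K(x,y)a_Q(y)\,dy$: when $\ell_Q<1$ and $2^{k+1}Q\setminus 2^k Q \subseteq \{|x-x_Q|\leq 1/2\}$, Taylor-expand $K(x,\cdot)$ around $x_Q$ up to order $N_q$ and exploit the moments of $a_Q$ together with \eqref{eqi} applied with $M = N_q+1$, $m=0$, $\rho=1$, obtaining
\begin{equation*}
|b(x,D)a_Q(x)| \lesssim \frac{\ell_Q^{\,N_q+1}\,\|a_Q\|_{L^1}}{|x-x_Q|^{n+N_q+1}} \lesssim \ell_Q^{\,N_q+1+n(1-1/\lambda)}\,(2^k \ell_Q)^{-(n+N_q+1)}
\end{equation*}
on the annulus. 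At larger separations \eqref{eqii} supplies arbitrary polynomial decay, and for rough atoms ($\ell_Q\geq 1$) the Taylor step is skipped and \eqref{eqii} is applied directly on every annulus. Normalizing each annular piece as a rough $h\mathcal{M}_q^\lambda$-atom attached to $2^{k+1}Q$ then yields coefficients $t_{Q,k}$ with geometric decay $|t_{Q,k}| \lesssim 2^{-k\varepsilon}$ for some $\varepsilon>0$, where the inequality $N_q+1 > n(1/q-1)$ produces the needed exponent.

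The main obstacle is the double-sum bookkeeping: one must verify
\begin{equation*}
\sup_J |J|^{q/\lambda-1}\sum_{Q'\subseteq J}\bigl(|Q'|^{1/q-1/\lambda}|t_{Q,Q'}|\bigr)^q \lesssim 1
\end{equation*}
uniformly in $Q$ (where $Q'$ ranges over the dilates $\{2^{k+1}Q\}_{k\geq 0}$) and then combine with the sequence $\{s_Q\}$ via the split $\sum_{Q'} = \sum_{Q' \subseteq J} + \sum_{J \subset Q'}$ already carried out in the proof of Corollary~\ref{cor-decay}. A secondary delicate point is the transition between the near-field kernel estimate \eqref{eqi} and the far-field estimate \eqref{eqii} when stitching the annular decomposition. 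The hypothesis $\delta<1$ enters only through Theorem~\ref{propah} (which guarantees $\rho=1$ in \eqref{eqi} without derivative loss on the kernel); since every differentiation falls on the kernel in the Taylor expansion, the type parameter $\delta$ does not reappear in the final atomic bound.
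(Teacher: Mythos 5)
Your overall strategy---decompose $f$ into atoms and then decompose each image $b(x,D)a_Q$ into new atoms so as to invoke the converse direction of Theorem~\ref{atomich22}---is genuinely different from the paper's, and it contains gaps that are not merely bookkeeping. The most serious one concerns the local piece $\bigl(b(x,D)a_Q\bigr)\mathds{1}_{2Q}$: operators in $OpS^{0}_{1,\delta}$ are bounded on $L^{r}$ only for $1<r<\infty$, not on $L^{\infty}$, so the bound $\|b(x,D)a_Q\|_{L^r(2Q)}\lesssim |Q|^{1/r-1/\lambda}$ produces an $L^r$-normalized object, not a bounded atom in the sense of Definition~\ref{defatom} (which requires $\|a_Q\|_{L^\infty}\leq |Q|^{-1/\lambda}$). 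Theorem~\ref{atomich22} is stated and proved only for $L^\infty$-atoms, and the paper establishes no $(q,\lambda,r)$-atomic or molecular decomposition that could absorb such a piece; without that, the converse direction simply cannot be applied. A second gap: when $\ell_Q<1$, the annular pieces $\bigl(b(x,D)a_Q\bigr)\mathds{1}_{2^{k+1}Q\setminus 2^kQ}$ for small $k$ are supported on cubes of sidelength still below $1$, so by Definition~\ref{defatom} they would have to carry vanishing moments up to order $N_q$, which a restriction of $b(x,D)a_Q$ to an annulus has no reason to satisfy; calling them ``rough atoms'' is not available there, since rough atoms require $\ell\geq 1$. Finally, the double-sum coefficient estimate in the $\|\cdot\|_{\lambda,q}$ quasi-norm, which you correctly flag as the main obstacle, is left entirely unresolved, and it is not routine because many source cubes $Q$ contribute to the same target dilate.

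The paper sidesteps all three issues by never decomposing $Ta_Q$ at all: it estimates $|J|^{q/\lambda-1}\int_J |m_\varphi Ta_Q|^q\,dx$ directly for each fixed cube $J$, using the $L^r$ bound on $J\cap Q^{\ast}$ and pointwise kernel decay on $J\setminus Q^{\ast}$, and then sums over $Q\subseteq J$ and $J\subset Q$ separately against the coefficient norm. A further ingredient you omit is needed for that route (and would also be needed to repair yours if you wanted maximal-function estimates): the kernel bounds must hold for the composed operators $f\mapsto \varphi_t\ast b(x,D)f$ uniformly in $0<t\leq 1$, which the paper obtains by observing that $\widehat{\varphi}(t\xi)$ lies in $S^0_{1,0}$ uniformly in $t$, so that $b_t(x,\xi)\in S^{0}_{1,\delta}$ with uniform seminorms. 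Your Taylor-expansion step with $M=N_q+1$ and the use of \eqref{eqii} for rough atoms do match the paper's treatment of the far field, but as written the argument does not close.
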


As an application of  {Theorem \ref{cor-decay2}}, we may conclude that pseudodifferential operators in the H\"ormander class $OpS^{m}_{1,0}(\R^n)$ can not be bounded in $h\mathcal{M}_q^{\lambda}(\R^n)$ for $0< q<\lambda\leq 1$ and $m>0$. Indeed, consider the pseudodifferential operator $J_m(D)$ associated to the symbol $\langle \xi\rangle^{m}\in S_{1,0}^m(\R^n)$ with $m>0$.
 {According to the proof of Proposition  \ref{exam.53}, there exists $a_\varepsilon(x):=\varepsilon^{-n/\lambda}a_Q(x/\varepsilon)$ an $h\mathcal{M}_q^{\lambda}-$atom with $\|a_{\epsilon}\|_{h\mathcal{M}_q^{\lambda}} \simeq \|a_{\epsilon}\|_{\mathcal{HM}_q^{\lambda}}\lesssim \|\psi\|_{L^{\infty}}\lesssim 1$ for $0<\epsilon \leq 1 $.
Let  $g_{\epsilon}(x) := J_{m}(D)a_{\epsilon}(x)$ that satisfies $
		\widehat{g}_{\varepsilon}(\xi',\xi_n) =\langle\xi\rangle^{m}\widehat{a}_{\varepsilon}(\xi',\xi_n)$, then $\vert \widehat{g}_{\varepsilon}(0,1/{4k\varepsilon})\vert \simeq \varepsilon^{n(1-{1}/{\lambda})-m}$. Hence, from Theorem  \ref{cor-decay2} one has 
		\begin{equation}\nonumber
			\Vert J_m a_{\varepsilon}\Vert_{h\mathcal{M}_q^{\lambda}} \gtrsim  \frac{\vert \widehat{g}_{\varepsilon}(0,1/{4k\varepsilon})\vert }{\langle(0,1/4k\varepsilon)\rangle^{n(1/\lambda-1)}} \simeq \varepsilon^{-m} \rightarrow \infty\; ,
		\end{equation}
		as $\varepsilon \rightarrow 0^{+}$, thus the Theorem \ref{thm-Op}  can not be extended for operators in the class $OpS^{m}_{1,0}(\mathbb{R}^{n})$ for $0<q <\lambda\leq 1$ with $m>0$}.  Moreover, in the same spirit of Proposition \ref{optimal_Sigma2}, the previous argument can be extended to show that if $0<q_{1}, q_{2} \leq 1$,  $q_{1} \leq \lambda_{1}\leq 1$ satisfies $\frac{q_{2}}{\lambda_{2}}=\frac{q_{1}}{\lambda_{1}}$  and  
$0<m < n({1}/{\lambda_{1}}-{1}/{\lambda_{2}})$ then there exists $\sigma(x,D)\in OpS^{-m}_{1,0}(\mathbb{R}^n)$ which is not continuous from $h\mathcal{M}_{q_1}^{\lambda_{1}}(\mathbb{R}^n)\;$ to $\;h\mathcal{M}_{q_2}^{\lambda_{2}}(\mathbb{R}^n)$. Hence, the condition $m\geq  n({1}/{\lambda_{1}}-{1}/{\lambda_{2}})$ is natural. The case $m=0$ was settled by  \cite{Dachun3}.

\noindent \textbf{Acknowledgment.} We would like to thank Prof. Yoshihiro Sawano (Department of Mathematics and Information Sciences, Tokyo Metropolitan University) for carefully reading through the drafts of this paper.


\begin{thebibliography}{999}
\bibitem{deAlmeida2}M. F. de Almeida\ and\ L. C. F. Ferreira, On the well posedness and large-time behavior for Boussinesq equations in Morrey spaces, Differential Integral Equations {\bf 24} (2011), no.~7-8, 719--742.

\bibitem{akb}A. Akbulut, V. Guliyev, T. Noi and Y. Sawano, \textit{Generalized Hardy-Morrey spaces}, Z. Anal. Anwend. {\bf 36} (2017), no.~2, 129--149.

\bibitem{Adams}D.  Adams\ and\ J. Xiao, \textit{Nonlinear potential analysis on Morrey spaces and their capacities}, Indiana Univ. Math. J. {\bf 53} (2004), no.~6, 1629--1663.

\bibitem{Adams0}D.  Adams, {\it Morrey spaces}, Lecture Notes in Applied and Numerical Harmonic Analysis, Birkh\"{a}user/Springer, Cham, 2015.

\bibitem{Bownik}M. Bownik\ and\ L.-A. D. Wang, Fourier transform of anisotropic Hardy spaces, Proc. Amer. Math. Soc. {\bf 141} (2013), no.~7, 2299--2308.

\bibitem{Dafni} G. Dafni, \textit{Hardy spaces on strongly pseudoconvex domains in $C^n$ and domains of finite type in $C^2$}, Ph.D. thesis, Princeton University, 1993.

\bibitem{Dafni2} G. Dafni, C. Ho Lau, T. Picon and C. Vasconcelos, \textit{Inhonogeneous cancellations conditions and Calderón-Zygmund-type operators on $h^p$}, Nonlinear Analysis \textbf{225} (2022), 113110

\bibitem{Dafni4} G. Dafni, C. Ho Lau, T. Picon and C. Vasconcelos, \textit{Necessary cancellation conditions for the boundedness of operators on local Hardy spaces}, Arxiv preprint  (2022);

\bibitem{Dafni3} G. Dafni and E. Liflyand, \textit{A local Hilbert transform, Hardy`s inequality and molecular characterization of Goldberg`s local Hardy space},
Complex Analysis and its Synergies 5 (2019), no. 10. 

\bibitem{Garcia} J. de Francia and J. Garcia-Cuerva, \textit{Weighted Norm Inequalities and Related Topics}, North Holland Mathematics Studies 116, Elsevier (1985).

\bibitem{FS}C. Fefferman\ and\ E. Stein, \textit{$H^{p}$ spaces of several variables}, Acta Math. {\textbf 129} (1972), no.~3-4, 137--193. 

\bibitem{Giga}Y. Giga\ and\ T. Miyakawa, \textit{Navier-Stokes flow in $\mathbb{R}^3$ with measures as initial vorticity and Morrey spaces}, Comm. Partial Differential Equations {\bf 14} (1989), no.~5, 577--618.

\bibitem{G} D. Goldberg, \textit{A local version of real Hardy spaces,} Duke Math. J. \textbf{46} (1979) 27--42.

\bibitem{HKP} G. Hoepfner, R. Kapp and T. Picon, {\it On the continuity and compactness of pseudodifferential operators on localizable Hardy spaces}, Potential Analysis \textbf{155} (2021) 491-512.

\bibitem{HK} J. Hounie and R. Kapp, \textit{Pseudodifferential Operators on Local Hardy Spaces}, J. Fourier Anal. Appl. \textbf{15} (2009), 153--178.

\bibitem{K-p-Ho}{K.-P. Ho, \textit{Atomic decompositions of weighted Hardy-Morrey spaces}, Hokkaido Math. J. {\bf 42} (2013), no.~1, 131--157.}

\bibitem{K-p-Ho2}K.-P. Ho, \textit{Intrinsic atomic and molecular decompositions of Hardy-Musielak-Orlicz spaces}, Banach J. Math. Anal. {\bf 10} (2016), no.~3, 565--591.

\bibitem{Tanaka}T. Iida, Y. Sawano\ and\ H. Tanaka, \textit{Atomic decomposition for Morrey spaces}, Z. Anal. Anwend. {\bf 33} (2014), no.~2, 149--170. 

\bibitem{WJ} H. Jia and H. Wang, \textit{Singular integral operator, Hardy-Morrey space estimates for multilinear operators and Navier-Stokes equations}, Math. Methods Appl. Sci. {\bf 33} (2010), no.~14, 1661--1684.

\bibitem{HH} H. Jia and H. Wang, \textit{Decomposition of Hardy-Morrey spaces}, J. Math. Anal. Appl. {\bf 354} (2009), no.~1, 99--110.

\bibitem{Xiao}L. Liu and J. Xiao, \textit{Restricting Riesz-Morrey-Hardy potentials}, J. Differential Equations {\bf 262} (2017), no.~11, 5468--5496. 

\bibitem{Olsen}P. A. Olsen, Fractional integration, Morrey spaces and a Schr\"odinger equation, Comm. Partial Differential Equations {\bf 20} (1995), no.~11-12, 2005--2055.

\bibitem{S} E. Stein, \textit{ Harmonic analysis: real-variable methods, orthogonality, and oscillatory integrals}, Princeton Mathematical Series, 43, Princeton Univ. Press, Princeton, NJ, 1993.

\bibitem{Sawano}Y. Sawano, \textit{A note on Besov-Morrey spaces and Triebel-Lizorkin-Morrey spaces}, Acta Math. Sin. (Engl. Ser.) {\bf 25} (2009), no.~8, 1223--1242.

\bibitem{Sawano2}Y. Sawano, D. Yang\ and\ W. Yuan, \textit{New applications of Besov-type and Triebel-Lizorkin-type spaces}, J. Math. Anal. Appl. {\bf 363} (2010), no.~1, 73--85.

\bibitem{Sawano3}Y. Sawano\ and\ H. Tanaka, \textit{Decompositions of Besov-Morrey spaces and Triebel-Lizorkin-Morrey spaces}, Math. Z. {\bf 257} (2007), no.~4, 871--905.

\bibitem{Ta} M. Taylor, \textit{Analysis on Morrey spaces and applications to Navier-Stokes and other evolution equations}, Commun. in Partial Differential Equations  \textbf{17} (1992), no. 9\&10, 1407--1456.

\bibitem{Triebel}H. Triebel, {\it Local function spaces, heat and Navier-Stokes equations}, EMS Tracts in Mathematics, 20, European Mathematical Society (EMS), Z\"{u}rich, 2013.

\bibitem{Dachun3}F. Wang, D. Yang\ and\ S. Yang, \textit{Applications of Hardy spaces associated with ball quasi-Banach function spaces}, Results Math. {\bf 75} (2020), no.~1, paper N\o. 26, 58 pp.

\bibitem {CTZ} C. Zorko, \textit{Morrey space}, Proc. Amer. Math. Soc. {\textbf{98}} (1986), 586--592.

\end{thebibliography}
\end{document}